\DeclareMathOperator{\eco}{e-conv}
\DeclareMathOperator{\Keco}{\emph{K}-e-conv}						
\DeclareMathOperator{\conv}{conv}
\DeclareMathOperator{\Kconv}{\emph{K}-conv}
\DeclareMathOperator{\Kcl}{\emph{K}-cl}
\DeclareMathOperator{\cl}{cl}
\DeclareMathOperator{\dom}{dom}
\DeclareMathOperator{\epi}{epi}
\DeclareMathOperator{\gph}{gph}
\def\supp{\mathop{\rm sup}}
\newcommand{\en}{\rightarrow}											
\newcommand{\R}{\mathbb{R}}												
\newcommand{\Ramp}{\overline{\R}}					
\newcommand{\ci}{\left\langle}										
\newcommand{\cd}{\right\rangle}										
\begin{document}

\title {Set-valued evenly convex functions: characterizations and c-conjugacy}

\author{M.D. Fajardo}


\institute{M.D. Fajardo \at
             Mathematics Department, Faculty of Sciences, University of Alicante, Alicante, Spain\\
              Tel.: +34-965903532\\
              Fax: +34-965903531\\
              \email{md.fajardo@ua.es} }

\maketitle

\begin{abstract}
In this work we deal with set-valued functions with values in the power set of a separated locally convex space where a nontrivial pointed convex cone induces a partial order relation. A set-valued function is  evenly convex if its epigraph is an evenly convex set, i.e., it is the intersection of an arbitrary family of open half-spaces. In this paper we characterize evenly convex set-valued functions as the pointwise supremum of its set-valued e-affine minorants. Moreover, a suitable conjugation pattern will be developed for these functions, as well as the counterpart of the biconjugation Fenchel-Moreau theorem.
\keywords{Evenly convex sets \and set-valued functions \and partially ordered spaces \and convex conjugation \and Fenchel-Moreau theorem}
 \subclass{49N15 \and 52A41 \and 90C25}
\end{abstract}

\section{Introduction}
\label{intro}
\emph{Evenly convex} sets (e-convex in brief)  were introduced by Fenchel \cite{F1952} in $\R^n$  as those sets which can be expressed as the intersection of an arbitrary (possibly empty) family of open half-spaces. He initiated the study of this kind of sets defining a natural polarity operation and mentioning some of their properties. Later, they appeared in some works related to quasiconvex programming (see \cite{Ml1983}, \cite{Ml1988}, \cite{PP1984} and \cite{PV1990}) and mathematical economics \cite{ML1991}. E-convex sets are the solution sets of linear systems of either weak or strict inequalities, and from this point of view, their properties have been studied in \cite{GJR2003} and \cite{GR2006}, whereas some important characterizations of e-convex sets in terms of their sections and projections were given in \cite{KMZ2007}. Finally, some properties of this class of sets in locally convex spaces can be found in \cite{DML2002}.

 In a natural way, the concept of even convexity was applied to an extended real-valued function defined on $\R^n$ in \cite{RVP2011}, where a function is said to be e-convex if its epigraph is e-convex. In that work, the main properties of these functions, which extend the class of convex and lower semicontinuous functions, were studied. In particular, some characterizations were provided. Later, in  \cite{MLVP2011}, the notion of e-convex function was considered for extended real-valued functions defined on locally convex spaces, and proper e-convex functions were characterized as the pointwise supremum of their e-affine minorants. This characterization allowed, also in \cite{MLVP2011}, the definition of a suitable conjugation scheme for e-convex functions. This pattern  is called \emph{$c$-conjugation}, and it was inspired by a survey written  by Mart\'inez-Legaz \cite{ML2005} where generalized convex duality theory is applied to quasiconvex programming. As it is shown in \cite{MLVP2011}, the $c$-conjugation scheme allowed to set forth the e-convex counterpart of the well-known Fenchel-Moreau theorem. Recently, some works have been developed around duality theory in evenly convex optimization (\cite{F2015}, \cite{FVR2016}, \cite{FV2016SSD}, \cite{FV2017}, \cite{FV2018}), building different c-conjugate dual problems by means of the perturbation approach and obtaining regularity conditions expressed in terms of the even convexity of the perturbation function. A monograph  presenting the state-of-art on even convexity in finite dimensional spaces has been published very recently (see \cite{FGRV2020}).
 
 In this paper we extend the notion of even convexity to set-valued functions which map a separated locally convex space $X$  into the power set $\mathcal{P}(Z)$ of another separated locally convex space $Z$, assuming that there exists a partial order in $Z$ induced by a pointed convex cone $K \subset Z$. Every concept and result from the even convexity scalar theory can be translated in the field of set-valued functions by choosing appropiate definitions of properness, e-affine set-valued minorants, set-valued c-conjugate function, etc, and, moreover, extending operations, like the meaning of pointwise supremum or infimum. We will assume the same framework as in \cite{H2009}, where a characterization for proper closed convex set-valued functions by its continuous affine minorants and an appropiate Fenchel conjugate are provided. Also in that paper introduction section, different approaches which have been developed for conjugation patterns suitable for closed convex set-valued functions are highlighted.

The layout of this work is as follows. Section \ref{sec2} contains preliminary results on set-valued functions, as well as fundamental definitions and results on e-convex sets and extended real-valued functions to make the paper self-contained.
Section \ref{sec3} is dedicated to characterizations of set-valued proper and e-convex functions in terms of different kinds of affine  minorants.
Section \ref{sec4} is devoted to a suitable conjugation pattern for set-valued e-convex functions, which will provide a biconjugation theorem.  

\section{Preliminaries}
\label{sec2}
Let $X$ and $Z$ be nontrivial separated locally convex spaces, we denote by $X^*$ and $Z^*$ their respective topological dual spaces, and by $\langle \cdot , \cdot \rangle$ the duality product in both cases: $\langle x, x^* \rangle = x^* (x)$ for all $x \in X, x^* \in X^*$ (analogously in $Z$ and $Z^*$). In this paper we will deal with set-valued functions $f:X \to \mathcal{P}(Z)$, where $\mathcal{P}(Z)$ is  the set of all the subsets of $Z$, including the empty set.  The algebraic structure of $\mathcal{P}(Z)$ is determined by the Minkowski sum $A+B=\{ a+b:a\in A, b\in B\}$,  for all $A,B \in\mathcal{P}(Z)$, and the product with a nonnegative real number  $tA=\{ta:a\in A \}$. We set $A+\emptyset =\emptyset+A=\emptyset$, for all $A \in\mathcal{P}(Z)$, $t\emptyset=\emptyset$, for all $t \in \R \setminus \{0\}$ and $0 \emptyset = \{0\}$. We write $A+ \{z\}=A+z$, $A+\{-z\}=A-z$. We set, for $A \neq \emptyset$ or $A \neq Z$, $(-1)A=-A=\lbrace -a: a \in A\rbrace$, whereas $-Z= \emptyset$ and $-\emptyset=Z$. Finally, we write $A+(-B)=A-B$.\\

In \cite{KTZ2015} several order relations which are used in order to formulate corresponding solution concepts for a set-valued optimization problem are introduced, allowing more practical comparisons among values of the set-valued objective function. Let $K \subset Z$  be a convex cone with $\{0\} \varsubsetneq K \varsubsetneq Z$.  Then $K$ induces in $Z$ a quasiorder (reflexive and transitive binary relation) in the following way: $x \leq_K y$ if and only if $y-x \in K$, for all $x,y \in Z$. Moreover, $K$ also induces a quasiorder relation in  $\mathcal{P}(Z)$ as $A \leq_K B$ if and only if $B \subseteq A+K$, see \cite [Definition 2.1] {KTX1997}, called  \emph{lower set less order relation} in \cite[Definition 2.6.9]{KTZ2015}. If we consider the subset of  $\mathcal{P}(Z)$
$$\mathcal{P}^{a}_K(Z):=\lbrace {A \in \mathcal{P}(Z) : A=A+K } \rbrace,$$
we will have that $\leq_K$ induces a partial order (reflexive, antisymmetric and transitive binary relation) in $\mathcal{P}^{a}_K(Z)$, in fact, for $A, B \in \mathcal{P}^{a}_K(Z)$, $A \leq_K B$ if and only if $A \supseteq B$. With the Minkowski sum and a little modification in the scalar product, $0A=K$, for all $A \in \mathcal{P}^{a}_K(Z)$, we will have that  $ \mathcal{P}^{a}_K(Z)$ is a partially ordered conlinear space (see \cite{H2005} for the definition and structural properties of conlinear spaces, also \cite[Appendix]{H2009} for a short introduction). Moreover, $(\mathcal{P}^{a}_K(Z), \leq_K)$ is a complete lattice, where, for $ \mathcal{A} \subseteq \mathcal{P}^{a}_K(Z)$, $\sup\lbrace \mathcal{A}, \leq_K \rbrace = \cap_{A \in \mathcal{A}} A$ and $\inf\lbrace \mathcal{A}, \leq_K \rbrace = \cup_{A \in \mathcal{A}} A$. In the case $ \mathcal{A}$ is empty, we put $\sup\lbrace \mathcal{A}, \leq_K \rbrace = Z$ and $\inf\lbrace \mathcal{A}, \leq_K \rbrace = \emptyset$. The following definitions are well-known.\\

A set-valued function $f:X \to \mathcal{P}(Z)$ is said to be \emph{proper} if its effective domain
\begin{equation*}
\dom f:=\{x \in X : f(x) \neq \emptyset \}
\end{equation*}
is nonempty and $f(x) \neq Z$, for all $x \in \dom f$. If $f$ is not proper, we say that it is \emph{improper}. It is said to be \emph{convex} (\emph{closed}, resp.) if its graph
$$\gph f:=\lbrace { (x,z) \in X \times Z : z \in f(x) } \rbrace$$
is a convex (closed with respect to the product topology, resp.) set in $X \times Z$.\\

We associate to any set-valued function $f:X \to \mathcal{P}(Z)$  the set-valued function 
\begin{equation} \label{def_fK}
f_K:X \to \mathcal{P}^{a}_K(Z), f_K(x):=f(x)+K.
\end{equation}
\begin{definition}
Let $f:X \to \mathcal{P}(Z)$ be a set-valued function. A set-valued minorant of $f$ is $g:X \to \mathcal{P}^a_K(Z)$  verifying that, for all $x \in X$, $g(x) \leq_K f(x)$, i.e., $f_K
(x) \subseteq g_K(x)$.
\end{definition}
\begin{definition} \cite[page 26] {KTZ2015}\label{def1}
A set-valued function $f:X \to \mathcal{P}(Z)$ is said to be \emph{K-convex} if the set-valued function 
$f_K$ defined in (\ref{def_fK}) is convex.
\end{definition}
Therefore, $K$-convexity means that the \emph {K-epigragh} of $f$,
\begin{equation*}
\epi_K f:=\{(x,z) \in X \times Z : z \in f(x)+K\}=\gph f_K \subset X \times Z
\end{equation*}
is  a convex set, or, equivalently, for all $t \in (0,1)$, and $x_1, x_2 \in X$
\begin{equation*}
f \left(tx_1+(1-t)x_2 \right)\leq_K tf(x_1)+(1-t)f(x_2).
\end{equation*} 
It is easy to see that, if  $f:X \to \mathcal{P}(Z)$ is $K$-convex, $f_K(x)$ is a convex set in $Z$, for all $x \in X$, and we can reduce the images space of $f_K$ to
$$\mathcal{Q}^{a}_K(Z):=\lbrace {A \in \mathcal{P}(Z) : A=\conv (A+K) } \rbrace,$$
which is a partially ordered conlinear subspace of $\mathcal{P}^{a}_K(Z)$. Hence, for a set-valued function $f:X \to \mathcal{P}(Z)$, its \emph{K-convex hull} 
$$ \Kconv f: X \to \mathcal{Q}^{a}_K(Z),$$
defined by $z \in (\Kconv f)(x)$, for $x \in X$ and $z \in Z$  if and only if $(x,z) \in \conv (\epi_K f)$, i.e.,
$$\epi_K(\Kconv f)=\gph (\Kconv f)=\conv (\epi_K f),$$
is (unique) well-defined. Moreover,  $\Kconv f$ is the largest $K$-convex minorant of $f$, since denoting by $\mathcal{G}= \lbrace{ g : g \text{ is a $K$-convex minorant of } f \rbrace}$, then
$$ (\Kconv f)(x)=\sup { \lbrace { g_K(x): g \in \mathcal G} \rbrace}. $$

In the following closed set-valued function definition (\cite[Definition 3]{H2009}), like in Definition \ref{def1}, we have also specified that the closedness of the function is related to the convex cone $K$, for the sake of consistency.
\begin{definition}
A set-valued function $f:X \to \mathcal{P}(Z)$ is said to be \emph{K-closed}  if the set-valued function $f_K$ is closed, i.e., $\epi_K f$ is a closed set with respect to the product topology on $X \times Z$. 
\end{definition}

It  is not difficult to prove that, if  $f:X \to \mathcal{P}(Z)$ is $K$-closed, $f_K(x)$ is a closed set in $Z$, for all $x \in X$, and we can reduce the images space of $f_K$ to
$$\mathcal{P}^{t}_K(Z):=\lbrace {A \in \mathcal{P}(Z) : A=\cl (A+K) } \rbrace,$$
which is a partially ordered conlinear space if we consider the modified Minkowski sum
$$A \oplus B:=\cl(A+B),$$
and the modified scalar product $0A:=\cl K$.

Hence, for a set-valued function $f:X \to \mathcal{P}(Z)$, its \emph{K-closed hull} 
$$ \Kcl f: X \to \mathcal{P}^{t}_K(Z),$$
is defined by $z \in (\Kcl f)(x)$, for $x \in X$ and $z \in Z$  if and only if $(x,z) \in \cl (\epi_K f)$, i.e.,
$$\epi_K (\Kcl f)=\gph (\Kcl f)=\cl (\epi_K f).$$

Clearly, the $K$-closed $K$-convex hull of a set-valued function $f:X \to \mathcal{P}(Z)$, $$ \Kcl\conv f: X \to \mathcal{Q}^{t}_K(Z),$$
is defined by $z \in (\Kcl \conv f)(x)$, for $x \in X$ and $z \in Z$  if and only if $(x,z) \in \cl \conv (\epi_K f)$, i.e.,
$$\epi_K (\Kcl \conv f)=\gph (\Kcl \conv f)=\cl \conv (\epi_K f),$$
where $$\mathcal{Q}^{t}_K(Z):=\lbrace {A \in \mathcal{P}(Z) : A=\cl \conv(A+K) } \rbrace,$$
which is a partially ordered conlinear subspace of $\mathcal{P}^{t}_K(Z)$.

We will use the following standard notation (see, for instance, \cite{Z2002}) for open half-spaces in locally convex spaces, either in $X$  or $Z$,  for $y^* \in X^*$  ($z^* \in Z^*$) and $\alpha   \in \R$  ($\beta \in \R$): 
\begin{equation*}
H^-_{y^*,\alpha}:=\{x \in X: \langle x, y^* \rangle < \alpha \},\\
H^-_{z^*,\beta}:=\{z \in Z: \langle z, z^* \rangle < \beta \}.
\end{equation*}

In what follows, we denote by $K^*$ the negative polar cone of $K$, i.e.,
$$K^*= \lbrace z^* \in Z^* : \langle z,z^* \rangle \leq 0, \text{ for all } z \in K \rbrace,$$
and, for $(x^*,z^*) \in X^*\times( K^*\setminus \{0\})$, we will use the following set-valued functions, $S_{(x^*,z^*)}, \bar {S}_{(x^*,z^*)}:X \to  \mathcal{Q}^a_K(Z)$,  with open and closed half-spaces in $Z$ as images, respectively, 
\begin{equation}\label{defS}
\begin{array} {lcl}
S_{(x^*,z^*)}(x)&:=\{z \in Z: \langle x, x^* \rangle + \langle z, z^* \rangle < 0\},\\
\bar S_{(x^*,z^*)}(x)&:=\{z \in Z: \langle x, x^* \rangle + \langle z, z^* \rangle \leq 0\}.
\end{array}
\end{equation}

As it was said in the introduction, Fenchel defined an e-convex set  $C\subseteq \R^n$ as a set which can be expressed as the intersection of an arbitrary family of open half-spaces. In \cite{DML2002} this kind of sets are defined in  locally convex spaces in the following equivalent way, which will be very useful in the sequel.
\begin{proposition}\label{defeconv}
A set $C\subseteq X$ is  e-convex  if for every point $x_0\notin C$, there exists $x^*\in X^*$ such that $\ci x-x_0,x^*\cd<0$, for all $x\in C$. 
\end{proposition}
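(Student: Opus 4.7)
The plan is to prove the equivalence by checking each implication separately, exploiting only the definitional statement that $C$ is e-convex iff it is an intersection of a family (possibly empty) of open half-spaces $H^-_{x^*,\alpha}=\{x\in X:\langle x,x^*\rangle<\alpha\}$.

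For the forward direction, I would write $C=\bigcap_{i\in I} H^-_{x_i^*,\alpha_i}$ and pick $x_0\notin C$. Then there is some $i_0\in I$ with $x_0\notin H^-_{x_{i_0}^*,\alpha_{i_0}}$, hence $\langle x_0,x_{i_0}^*\rangle\ge\alpha_{i_0}$. Combined with $\langle x,x_{i_0}^*\rangle<\alpha_{i_0}$ for every $x\in C$, this yields $\langle x-x_0,x_{i_0}^*\rangle<0$, so $x^*:=x_{i_0}^*$ works. The only corner case here is the empty index set $I=\varnothing$, where $C=X$ and the separation property holds vacuously.

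For the converse, I would construct the required family explicitly. For each $x_0\in X\setminus C$, take $x_{x_0}^*\in X^*$ provided by the hypothesis, set $\alpha_{x_0}:=\langle x_0,x_{x_0}^*\rangle$, and define $H_{x_0}:=H^-_{x_{x_0}^*,\alpha_{x_0}}$. The separation inequality gives $C\subseteq H_{x_0}$, while $x_0\notin H_{x_0}$. Hence
\begin{equation*}
C\subseteq\bigcap_{x_0\notin C}H_{x_0}\subseteq X\setminus(X\setminus C)=C,
\end{equation*}
so $C$ equals an intersection of open half-spaces, i.e., $C$ is e-convex. Again, if $C=X$ there is nothing to intersect and one uses the empty-intersection convention that $\bigcap_{i\in\varnothing}H_i=X$.

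The whole argument is a routine separation-style bookkeeping; the only genuinely delicate point, which I would address with a short remark, is the degenerate case $C=\varnothing$. Here the hypothesis is vacuous, and one must verify that $\varnothing$ itself is an intersection of open half-spaces: since $X$ is nontrivial, one may pick any $x^*\in X^*\setminus\{0\}$ and write $\varnothing = H^-_{x^*,0}\cap H^-_{-x^*,0}$, so $\varnothing$ is indeed e-convex. With this caveat handled, both directions are complete.
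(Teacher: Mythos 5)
Your proof is correct, and both directions (including the degenerate cases $C=X$ and $C=\varnothing$, where you rightly note that a nonzero $x^*$ exists because $X$ is a nontrivial separated locally convex space) are handled cleanly; the argument is the standard one establishing that the separation property is equivalent to being an intersection of open half-spaces. Note, however, that the paper itself supplies no proof of this proposition: it is quoted as a known equivalent definition from the cited reference [DML2002], so there is no in-paper argument to compare against. One could add that your converse also silently confirms $x^*\neq 0$ whenever $C\neq\varnothing$, since the strict inequality $\langle x-x_0,x^*\rangle<0$ for some $x\in C$ forces it, so every $H_{x_0}$ in your construction is a genuine open half-space.
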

 
For a set $C\subseteq X$, the \emph{e-convex hull} of $C$, $\eco C$, is the smallest e-convex set in $X$ containing $C$. For a convex subset  $C\subseteq X$, it always holds $C\subseteq \eco C \subseteq \cl C$.
This operator is well defined because the class of e-convex sets is closed under arbitrary intersections.
Since $X$ is a separated locally convex space, $X^*\neq \{0\}$. As a consequence of the Hahn-Banach theorem, it also holds that $X$ is e-convex and every closed or open convex set is e-convex as well.\\
\begin{definition} \cite{RVP2011}
A function $f:X \to \Ramp:=\R\cup \{\mp \infty\}$ is \emph{e-convex}  if its epigraph $\epi f=\lbrace (x,\alpha): f(x) \leq \alpha \rbrace$  is e-convex in $X\times \R$.
\end{definition}

Clearly, any lower semicontinuous convex function  $f:X \to \Ramp$  is e-convex, but the converse does not hold, as the following example shows.
\begin{example}
Let $f:\R \to \Ramp$,
\begin{equation*}
f(x) :=\left\{
\begin{array}{ll}
x^2& \text{if } x<0, \\
+\infty & \text{otherwise.}
\end{array}
\right.
\end{equation*}
Its epigraph is an e-convex set, since we can find, for any point not belonging to $\epi f$,  a hyperplane passing through the point but with empty intersection with $\epi f$. Nevertheless, the convex set $\epi f$ is not closed.
\end{example}

The e-convex hull of a function $f:X \to \Ramp$, $\eco f$, is defined as the largest e-convex minorant of $f$. It holds that, if $\bar f$ is the lower semicontinuous convex hull of any function $f:X \to \Ramp$, then $\bar f(x) \leq \eco f(x) \leq f(x)$, for all $x \in X$.

Now we extend the concept of even convexity to set-valued functions. Like in convexity and closedness definitions, we will give a general definition and another one depending on the chosen convex cone $K$.
\begin{definition}
 A set-valued function $f:X \to \mathcal{P}(Z)$ is \emph{e-convex}  if its graph is an e-convex set in $X \times Z$.
\end{definition}
\begin{definition}
 A set-valued function $f:X \to \mathcal{P}(Z)$ is \emph{K-e-convex}  if $f_K$ is an e-convex function. It is equivalent to saying that its $K$-epigraph is an e-convex set in $X \times Z$. 
\end{definition}

\begin{remark}\label{rem1}
The definition of $K$-e-convex set-valued  function collapses into the e-convex function definition  $f: X \to \Ramp$ if we consider a set-valued function $f^s:X \to \mathcal {P}(\R)$, with $K=\R_+$, which can be associated to $f$, having the same epigraph, i.e., $\epi f=\epi_K f^s$. It is enough to take $f^s(x)=\{f(x)\}$, when $f(x) \in \R$, $f^s(x)=\emptyset$ if $f(x)=+\infty$ and $f^s(x)=\R$ if $f(x)=-\infty$. It holds that $f$ is e-convex if and only if $f^s$ is $K$-e-convex. Indeed, we have
$$\epi_K f^s=\lbrace { (x,z):z\in f^s(x)+K \rbrace}=\lbrace { (x,z): f(x) \leq z \rbrace}=\epi f.$$
\end{remark}

The class of $K$-e-convex functions contains the class of $K$-closed $K$-convex functions, but this inclusion is strict.

\begin{example}
Let $f:\R \to \mathcal{P}(\R)$, with $K=\R_{+}$,
\begin{equation*}
f(x) :=\left\{
\begin{array}{ll}
[x^2, 2x^2] & \text{if } x>0, \\
\emptyset & \text{otherwise.}
\end{array}
\right.
\end{equation*}
We have that $$\epi_K f= \lbrace {(x,z):x>0, z \geq x^2 \rbrace},$$ 
which is an e-convex nonclosed set.
\end{example}

\begin{proposition}
 If  $f:X \to \mathcal{P}(Z)$ is $K$-e-convex, then $f_K(x)$ is an e-convex set in $Z$ for all $x \in X$.
\end{proposition}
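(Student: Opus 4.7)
The plan is to apply the geometric characterization of e-convex sets given in Proposition~\ref{defeconv}, transferred from the graph to a fixed fiber. Fix $x \in X$ and set $A:=f_K(x) \subseteq Z$. If $A=\emptyset$, then $A$ is trivially e-convex (the condition in Proposition~\ref{defeconv} is vacuously satisfied), so suppose $A\neq \emptyset$ and pick $z_0 \in Z \setminus A$.

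The key point is that $(x,z_0) \notin \gph f_K = \epi_K f$. Since $f$ is $K$-e-convex, $\epi_K f$ is e-convex in $X\times Z$, so by Proposition~\ref{defeconv} applied to the product space there exists a continuous linear functional on $X\times Z$ strictly separating $(x,z_0)$ from $\epi_K f$. Using the identification $(X\times Z)^* = X^* \times Z^*$, this gives a pair $(x^*,z^*)\in X^*\times Z^*$ satisfying
\begin{equation*}
\langle y-x, x^*\rangle + \langle w-z_0, z^*\rangle < 0 \qquad \text{for all } (y,w) \in \gph f_K.
\end{equation*}

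Next I would specialize to the fiber at $x$. For every $w \in A = f_K(x)$ the pair $(x,w)$ belongs to $\gph f_K$, so the inequality above with $y=x$ collapses to
\begin{equation*}
\langle w - z_0, z^*\rangle < 0 \qquad \text{for all } w \in A.
\end{equation*}
This is exactly the separation condition in Proposition~\ref{defeconv} for the set $A \subseteq Z$ and the arbitrary exterior point $z_0$, hence $f_K(x)$ is e-convex in $Z$.

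There is no real obstacle here; the proof is essentially a fibering argument based on the definition. The only subtlety worth noting is that the functional produced by Proposition~\ref{defeconv} lives on the product and one must read off its $Z$-component through the canonical identification $(X\times Z)^*\simeq X^*\times Z^*$, which holds because $X$ and $Z$ are separated locally convex spaces endowed with the product topology. The $z^*$ obtained may well be zero (this happens when $z_0$ can already be separated from $\gph f_K$ purely by an $X^*$-functional), but Proposition~\ref{defeconv} does not require the separating functional to be nonzero, so this causes no trouble.
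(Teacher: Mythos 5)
Your proof is correct and follows essentially the same route as the paper: take $z_0\notin f_K(x)$, apply Proposition~\ref{defeconv} to the e-convex set $\epi_K f$ at the point $(x,z_0)$, and restrict the resulting separating functional to the fiber over $x$. The only quibble is your closing remark: when $f_K(x)\neq\emptyset$ the $z^*$ you obtain can in fact never be zero (plugging $y=x$ and any $w\in f_K(x)$ into the separation inequality would give $0<0$), but as you note this is immaterial to the argument.
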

\begin{proof}
Let us show that, for $\bar x \in \dom f$, $f(\bar x)+K$ is e-convex (it is evident when $\bar x \not \in \dom f$  and $f(\bar x)+K=\emptyset$). \\
Take a point $\bar z \notin f(\bar x)+K$. Then $(\bar x, \bar z) \notin \epi_K f$ which is e-convex, and according to Proposition \ref {defeconv}, there exists $0 \neq (x^*, z^*) \in X^* \times Z^*$ such that 
$$ \langle x-\bar x, x^* \rangle + \langle z-\bar z, z^* \rangle < 0,$$
for all $(x,z) \in \epi_K f$. Now, for any point $z \in f(\bar x)+K$, since  $(\bar x, z) \in \epi_K f$, we have
$$ \langle z-\bar z, z^* \rangle < 0,$$
and it shows, again in virtue of Proposition \ref {defeconv}, that $f(\bar x)+K$ is e-convex.\qed
\end{proof}
According to the previous result, if $f:X \to \mathcal{P}(Z)$ is $K$-e-convex, we can reduce the images space of $f_K$ to
$$\mathcal{R}_K(Z):=\lbrace {A \in \mathcal{P}(Z) : A=\eco (A+K) } \rbrace.$$
\begin{remark}
Actually the set-valued functions $S_{(x^*,z^*)}$ and  $\bar {S}_{(x^*,z^*)}$ introduced in (\ref{defS}) map $X$ to  $\mathcal{R}_K(Z)$.
\end{remark}
In general, the Minkowski sum of two e-convex sets is not e-convex, see, for instance, \cite[Example 3.1]{GJR2003}. Therefore, we consider the modified Minkowski sum
$$A \boxplus B:=\eco(A+B),$$

\begin{proposition}
$(\mathcal{R}_K(Z), \boxplus)$ is a conmutative monoid with neutral element $K$.
\end{proposition}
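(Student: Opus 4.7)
The plan is to verify in turn the four defining properties of a commutative monoid: commutativity, closure under $\boxplus$, associativity, and the existence of a two-sided neutral element. Commutativity is immediate, since Minkowski addition is commutative and $\eco$ depends only on the underlying set. The useful preliminary fact I would extract first is that every $A \in \mathcal{R}_K(Z)$ satisfies $A + K = A$: one has $A + K \subseteq \eco(A+K) = A$ by definition of $\mathcal{R}_K(Z)$, while $A \subseteq A + K$ because $0 \in K$.

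For closure, I would reduce the problem to the following statement: if $S \subseteq Z$ satisfies $S + K = S$, then $\eco(S) + K = \eco(S)$. For each $k \in K$ the translation $z \mapsto z - k$ is an affine homeomorphism, so it preserves open half-spaces and therefore commutes with $\eco$, giving $\eco(S - k) = \eco(S) - k$. From $S + K = S$ one has $S \subseteq S - k$ (since $s \in S$ implies $s = (s+k) - k \in S-k$), and taking $\eco$ yields $\eco(S) \subseteq \eco(S) - k$, i.e., $\eco(S) + k \subseteq \eco(S)$. Applied to $S = A + B$, which inherits $S + K = (A+K) + B = A + B$, this shows $A \boxplus B = \eco(A+B)$ is both e-convex (by construction) and $K$-invariant, hence belongs to $\mathcal{R}_K(Z)$.

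For associativity, I would show both $(A \boxplus B) \boxplus C$ and $A \boxplus (B \boxplus C)$ coincide with $\eco(A+B+C)$. Translational invariance of $\eco$ gives, for every $c \in C$, $\eco(A+B) + c = \eco(A+B+c) \subseteq \eco(A+B+C)$, so unioning over $c$ yields $\eco(A+B) + C \subseteq \eco(A+B+C)$. Combined with the trivial inclusion $A+B+C \subseteq \eco(A+B) + C$, a final application of $\eco$ sandwiches $(A \boxplus B) \boxplus C$ between $\eco(A+B+C)$ and itself, forcing equality; the symmetric computation handles the other bracketing.

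The neutral-element claim splits into two parts. The easy one is $A \boxplus K = \eco(A+K) = A$ for every $A \in \mathcal{R}_K(Z)$, which is literally the definition of $\mathcal{R}_K(Z)$. The main obstacle I expect is membership of $K$ itself in $\mathcal{R}_K(Z)$: since $K + K = K$ for a convex cone, this reduces to $\eco(K) = K$, i.e., to the e-convexity of the ordering cone. Not every convex cone in a locally convex space is e-convex, so I would either treat this as a standing assumption on $K$ or derive it from additional hypotheses on $K$ available in the ambient framework (for instance, closedness of $K$ would suffice, since every closed convex set is e-convex).
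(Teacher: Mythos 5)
Your proof is correct and in places more careful than the paper's. The paper dismisses commutativity and the neutrality of $K$ as evident and concentrates on associativity, which it obtains from the identity $\eco(A+\eco B)=\eco(A+B)$, itself derived from the cited inclusion $\eco A+\eco B\subseteq \eco(A+B)$ of \cite{GR2006}. You reach the same identity (in the equivalent form $\eco(\eco(A+B)+C)=\eco(A+B+C)$) by an elementary, self-contained route: translation invariance $\eco(S+v)=\eco(S)+v$ plus monotonicity of $\eco$. That buys independence from the external reference and, as a by-product, yields your closure argument ($S+K=S$ implies $\eco S+K=\eco S$), a point the paper leaves entirely implicit even though it is needed for $\boxplus$ to be an internal operation on $\mathcal{R}_K(Z)$. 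Only cosmetic care is needed for the empty set, which the paper's conventions handle.

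Your reservation about the neutral element is a genuine catch rather than a defect of your argument. Since $K+K=K$ for a convex cone containing the origin, membership $K\in\mathcal{R}_K(Z)$ is equivalent to $K=\eco K$, and the paper nowhere assumes $K$ to be e-convex or closed. Rather than adding a standing hypothesis, the cleaner repair is to note that $\eco K$ is always the neutral element: $\eco K+K=\eco K$ by your translation argument, so $\eco K\in\mathcal{R}_K(Z)$, and $A\boxplus\eco K=\eco(A+\eco K)=\eco(A+K)=A$ by the same identity used for associativity. This is consistent with the paper's own choice $0A:=\eco K$ for the modified scalar product immediately after the proposition; the statement should read ``neutral element $\eco K$'', which coincides with $K$ exactly when $K$ is e-convex.
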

\begin{proof}
Being evident the conmutativity of  the operation $\boxplus$ and the neutrality of $K$ in $\mathcal{R}_K(Z)$, let us prove the associative property. It will follow from the fact that, for $A,B \subseteq Z$ nonempty sets,
\begin{equation} \label{associative}
\eco (A+\eco B)=\eco (A+B).
\end{equation} 
To show this equality, is enough to see that $$\eco (A+\eco B) \subseteq \eco (A+B).$$
 According to \cite[Cor.2.1]{GR2006} \footnote{This result is established in a finite dimensional framework, and it can be easily extended to the locally convex spaces context.}, $$\eco A+\eco B \subseteq \eco (A+B),$$ hence
 $A+\eco B \subset\eco A+\eco B \subseteq \eco (A+B)$ and
$\eco (A+\eco B) \subseteq \eco (A+B)$.\\
Now let us take $A_1,A_2,A_3 \in \mathcal{R}_K(Z)$ nonempty sets (If at least one of them is the emptyset, the equality is trivial). We have, applying (\ref{associative}) when it is necessary,
$$A_1 \boxplus (A_2 \boxplus A_3)=\eco (A_1 +\eco (A_2 +A_3))=\eco (A_1+(A_2+A_3))=$$
$$\eco ((A_1+A_2)+A_3)=\eco (\eco (A_1+A_2)+A_3)=(A_1 \boxplus A_2) \boxplus A_3$$\qed
\end{proof}

Therefore, considering the modified Minkowski sum $\boxplus$ in $\mathcal{R}_K(Z)$ and the modified scalar product $$0A:=\eco K,$$ we conclude that $\mathcal{R}_K(Z)$ ia a partially ordered conlinear space.

 \begin{definition}
 For a set-valued function $f:X \to \mathcal{P}(Z)$, its \emph{K-e-convex hull} 
$$ \Keco f: X \to \mathcal{R}_K(Z),$$
is defined as the set-valued function whose graph is the e-convex hull of the $K$-epigraph of $f$, 
$$\epi_K (\Keco f)=\gph (\Keco f)=\eco (\epi_K f).$$

 \end{definition}
 
Let us observe that the $K$-e-convex hull of a set-valued function is (unique) well-defined. Moreover,  $\Keco f$ is the largest $K$-e-convex minorant of $f$, since denoting by $\mathcal{G}= \lbrace{ g : g \text{ is a $K$-e-convex minorant of } f} \rbrace$, then
$$ (\Keco f)(x)=\sup { \lbrace { g_K(x), g \in \mathcal G} \rbrace}, $$
for all $x \in X$.
\begin{remark}
In Remark \ref{rem1}, we saw how a set-valued function $f^s:X \to \mathcal {P}(\R)$, with $K=\R_+$, can be associated to any extended real-valued function $f: X \to \Ramp $, having the same epigraph, and $f$ is e-convex if and only if $f^s$ is $K$-e-convex. Nevertheless the $K$-e-convex hull of $f^s$ is not necessarily equal to the associated set-valued function of the e-convex hull of $f$, even their epigraphs cannot be equal, as the following example shows.
\end{remark}

\begin{example}
 Let $f:\R \to \R \cup \{+ \infty\}$ defined as
\begin{equation*}
f(x) :=\left\{
\begin{array}{ll}
+\infty & \text{ if } x<0,\\
1 & \text{if } x=0, \\
x^2 & \text{otherwise.}
\end{array}
\right.
\end{equation*}
Then
\begin{equation*}
f^s(x)=\left\{
\begin{array}{ll}
\emptyset & \text{ if } x<0,\\
\{1 \}& \text{if } x=0, \\
\{x^2\} & \text{otherwise}
\end{array}
\right.
\end{equation*}
and
\begin{equation*}
(\eco f)^s(x) =\left\{
\begin{array}{ll}
\emptyset & \text{ if } x<0,\\
\{0\} & \text{if } x=0, \\
\{x^2\} & \text{otherwise,}
\end{array}
\right.
\end{equation*}
but
\begin{equation*}
(\Keco f^s)(x) =\left\{
\begin{array}{ll}
\emptyset & \text{ if } x<0,\\
(0, +\infty ) & \text{if } x=0, \\
\left [x^2, +\infty \right ) & \text{otherwise.}
\end{array}
\right.
\end{equation*}
Moreover,
$$ \epi_K(\eco f)^s= \lbrace  {(x,z):x\geq 0, z\geq x^2 \rbrace}$$
and
$$ \epi_K  (\Keco f^s)= \gph (\Keco f^s)= \lbrace  {(x,z):x \geq 0, z\geq x^2 \rbrace}\setminus \{(0,0)\}.$$
We see that $ \epi_K (\Keco f^s) \subsetneq \epi ( \eco f)$. The reason is that the definition of the $K$-e-convex hull of a set-valued funtion as that function whose epigraph is the e-convex hull of the epigraph of the function does not work for an extended real valued function. As it happens in this example, the e-convex hull of an epigraph is not necessarily an extended real-valued function epigraph. For the function $f$, one has $ \eco (\epi f)=\lbrace  {(x,z):x \geq0, z\geq x^2 \rbrace} \setminus \{(0,0)\}$.
\end{example}

It is worthy to mention, for a better understanding of the next sections, that a set-valued function $f:X \to \mathcal{P}(Z)$ is $K$-convex ($K$-closed, $K$-e-convex, resp.) iff $\epi_K  (\Kconv f)$ ($\epi_K  (\Kcl f)$, $\epi_K ( \Keco f)$, resp.) is equal to $\epi_K f$, or, equivalently, $\Kconv f=f_K$, ($\Kcl f=f_K$, $\Keco f =f_K$, resp.). Hence, the behaviour of this kind of hulls differs from what is usually understood for a hull of any kind for an extended real-valued function, in the sense that, for instance, a $K$-e-convex set-valued function is not necessarily equal to its $K$-e-convex hull.\\
Finally, let us observe that, for all $x \in X$, $$(\Kcl  \conv f)(x) \leq_K (\Keco f)(x) \leq_K f(x),$$\\
and the first inequality can be strict. For instance,  in the previous example, we have

\begin{equation*}
(\Kcl \conv f^s)(x) =\left\{
\begin{array}{ll}
\emptyset & \text{ if } x<0,\\
\left[ x^2, +\infty \right ) & \text{otherwise.}
\end{array}
\right.
\end{equation*}

\section{Characterizing $K$-e-convex set-valued functions} \label{sec3}
It is a very well-known result  that a proper convex lower semicontinuos function $f: X \to \Ramp$, where $X$ is a locally convex space, is the pointwise supremum of its affine minorants, and \cite[Section 3]{H2009}  shows that this approach can be developed for set-valued functions with a suitable definition of a set-valued affine minorant.\\

Again in the scalar case,  in \cite{MLVP2011} different kinds of affine functions were defined, and proper e-convex functions were characterized as the pointwise supremum of  the different sets of its  affine minorants. Our aim in the following section is to characterize proper $K$-e-convex set-valued functions by means of its set-valued affine  minorants. We will define, like in \cite{MLVP2011}, the sets of $M_{f}$-affine, $\mathcal{C}_f$-affine and e-affine minorants of a set-valued function.\\

Previous definitions have been linked to the chosen convex cone $K$,  but in what follows, we assume that this cone is fixed, so it is not included in the next definitions, in order to avoid too long names.
\begin{definition}
Let $C \subseteq X$. A set-valued  function $a:X \to \mathcal{P}(Z)$ is \emph{C-affine} if there exist $x^* \in X^*$, $z^* \in K^*\setminus \{0\}$ and $\tilde z \in Z$ such that
\begin{equation} \label{c-affine}
a(x) =\left\{
\begin{array}{ll}
S_{(x^*,z^*)}(x)+\tilde z & \text{if } x\in C, \\
\emptyset & \text{otherwise.}
\end{array}
\right.
\end{equation}
 \end{definition}
\begin{remark}
It is easy to see that $a(x)+K=a(x)$, for all $x \in C$ (trivially if $x \notin C$), since $z \in a(x)$ if and only if $\langle x, x^* \rangle +\langle z- \hat z, z^*\rangle <0$, and $z^* \in K^*$. Then, a $C$-affine function maps any $x \in X$ to $a(x) \in \mathcal{P}^a_K(Z)$, i.e., $a:X \to \mathcal{P}^a_K(Z)$.
\end{remark}

In the next proposition we will use the fact that $C,D \subseteq X$ are e-convex iff $C\times D$ is e-convex. See \cite [Proposition 1.2] {RVP2011} for the proof in the finite dimensional case, which can easily be generalized to locally convex spaces.

\begin{proposition} \label{prop1}
Let  $a:X \to \mathcal{P}(Z)$ be a $C$-affine set-valued  function. Then if $C$ is e-convex, $a$ is $K$-e-convex. Moreover, in this case, $a:X \to \mathcal{R}_K(Z)$.
\end{proposition}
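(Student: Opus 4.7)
The plan is to reduce the $K$-e-convexity of $a$ to the observation that $\gph a$ is a finite intersection of e-convex sets in $X\times Z$. By the remark immediately preceding the statement, $a(x)+K=a(x)$ for every $x\in X$, so $a_K=a$ and therefore $\epi_K a=\gph a$. Hence $a$ is $K$-e-convex if and only if $\gph a$ is e-convex as a subset of $X\times Z$.

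Next I would rewrite the graph explicitly. Unfolding the definition (\ref{c-affine}) of $a$ and of $S_{(x^*,z^*)}$, for $x\in C$ we have $z\in a(x)$ iff $\langle x,x^*\rangle+\langle z-\tilde z,z^*\rangle<0$, whereas for $x\notin C$ the fiber is empty. This gives the clean decomposition
$$\gph a \;=\; (C\times Z)\,\cap\, H,\qquad H:=\{(x,z)\in X\times Z:\,\langle x,x^*\rangle+\langle z,z^*\rangle<\langle \tilde z,z^*\rangle\}.$$
Since $z^*\in K^*\setminus\{0\}$, the functional $(x^*,z^*)$ is a nonzero element of $X^*\times Z^*\simeq(X\times Z)^*$, so $H$ is a genuine open half-space of $X\times Z$ and hence e-convex.

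To finish the first assertion I would invoke the fact cited just before the proposition (a product of two e-convex sets is e-convex). Since $C$ is e-convex by hypothesis and $Z$ is itself e-convex as a separated locally convex space, the set $C\times Z$ is e-convex. Intersecting two e-convex sets preserves e-convexity (the e-convex class is closed under arbitrary intersections, as noted when $\eco$ was defined), so $\gph a$ is e-convex, proving that $a$ is $K$-e-convex.

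For the additional claim $a:X\to\mathcal{R}_K(Z)$, I would argue pointwise. For $x\in C$ fixed, the set $a(x)=\{z\in Z:\langle z-\tilde z,z^*\rangle<-\langle x,x^*\rangle\}$ is an open half-space in $Z$ (again because $z^*\neq 0$), hence e-convex; combined with $a(x)+K=a(x)$ this yields $a(x)=\eco(a(x)+K)$, i.e., $a(x)\in\mathcal{R}_K(Z)$. For $x\notin C$ the image is $\emptyset$, which also lies in $\mathcal{R}_K(Z)$. There is no real obstacle here; the only point that merits checking is that $(x^*,z^*)$ is nonzero in the product dual, so that both $H$ and the slices of $a(x)$ are authentic open half-spaces rather than degenerate objects, and this is immediate from $z^*\neq 0$.
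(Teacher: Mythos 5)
Your proof is correct and follows essentially the same route as the paper: the author likewise writes $\epi_K a$ as the intersection of the open half-space $\{(x,z):\langle x,x^*\rangle+\langle z,z^*\rangle<\langle\tilde z,z^*\rangle\}$ with $C\times Z$ and concludes e-convexity from closure of the e-convex class under intersections, with the membership $a(x)\in\mathcal{R}_K(Z)$ then following immediately. Your pointwise verification of the second claim merely spells out a step the paper leaves implicit.
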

\begin{proof}
Let $C$ be e-convex in (\ref{c-affine}). We have
$$\epi_K a= \lbrace (x,z) : \langle x, x^* \rangle +\langle z, z^*\rangle < \langle \tilde z, z^* \rangle \rbrace \bigcap \lbrace C \times Z\rbrace,$$
which is an e-convex set, since it is the intersection of e-convex sets. It follows immediately that $a(x) \in  \mathcal{R}_K(Z)$, for all $x \in X$.
\qed
\end{proof}
\begin{remark}
It is well-known that if $C \subseteq X$ is a convex set and $T:X \to Y$ is a linear operator, (in this case, $X$ and $Y$ are real linear vector spaces), then $T(C)$ is also a convex set, and from this result one can derive that the effective domain of a $K$-convex set-valued function is convex,  also a well-known property. But this is not true, in general, either for e-convex sets or for $K$-e-convex functions. See, for instance, \cite[Example 2.4]{RVP2011}. 
\end{remark}
For a set-valued function $f:X \to \mathcal{P}(Z)$ we denote by $M_{f}=\eco( \dom f)$, and by $\mathcal{H}_f$ the set of all the $M_{f}$-affine minorants of $f$:
$$\mathcal{H}_f=\lbrace {a:X \to \mathcal{R}_K(Z): a \text{ is } M_{f} \text{-affine and }  a(x) \leq_K f(x), \text{ for all } x \in X  } \rbrace.$$
\begin{remark}\label{rem3}
Clearly, according to Proposition \ref{prop1}, every $M_{f}$-affine minorant of $f$ is $K$-e-convex. Moreover, $\mathcal{H}_f=\mathcal{H}_{f_K}$.
\end{remark}
\begin{remark}\label{remark1}
Let us see what happens with $\mathcal{H}_f$ is $f$ is an improper set-valued function.
If $f \equiv \emptyset$, then $\dom f = \emptyset$ and $\mathcal{H}_f=\{f\}$. On the other hand, if $f(\bar x)=Z$, for some $\bar x \in X$, in the case there exists $a \in \mathcal{H}_f$, it will follow that $S_{(x^*,z^*)}(\bar x)+\hat z=Z$  with $(x^*, z^*) \in X^* \times (K^* \setminus \{0\})$ and $\hat z \in Z$, and this is impossible, it must be  $\mathcal{H}_f=\emptyset$.
\end{remark}
The proof of the following lemma is similar to the scalar case (\cite [Lemma 7] {MLVP2011}). 
\begin{lemma}\label{lemma2}
Let  $f:X \to \mathcal{P}(Z)$ be a set-valued function. Then  $\mathcal{H}_f=\mathcal {H}_{K \text{-}{\eco f}}$.
\end{lemma}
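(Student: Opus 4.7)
The plan is to mirror the scalar argument of \cite[Lemma 7]{MLVP2011} by establishing the two inclusions $\mathcal{H}_{\Keco f} \subseteq \mathcal{H}_f$ and $\mathcal{H}_f \subseteq \mathcal{H}_{\Keco f}$ separately, after first securing that both sides are indexed by the same e-convex ``domain of admissible affinities'', namely $M_f = M_{\Keco f}$. Once this is in hand, the two inclusions fall out of combining Proposition \ref{prop1} with the characterization of $\Keco f$ as the largest $K$-e-convex minorant of $f$, recorded immediately before the lemma.

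The most delicate step is verifying $M_f = M_{\Keco f}$. The inclusion $M_f \subseteq M_{\Keco f}$ is immediate from $\dom f \subseteq \dom(\Keco f)$ and the monotonicity of $\eco$. The converse hinges on showing that if $x_0 \in \dom(\Keco f)$, i.e., $(x_0, z_0) \in \eco(\epi_K f)$ for some $z_0 \in Z$, then $x_0 \in \eco(\dom f)$. I would argue contrapositively: if $x_0 \notin \eco(\dom f)$, Proposition \ref{defeconv} yields $x^* \in X^*$ with $\ci x - x_0, x^* \cd < 0$ for every $x \in \dom f$. The nonzero functional $(x^*, 0) \in X^* \times Z^*$ then satisfies $\ci x - x_0, x^* \cd + \ci z - z_0, 0 \cd < 0$ for every $(x, z) \in \epi_K f$ and every choice of $z_0 \in Z$, so Proposition \ref{defeconv} rules every pair $(x_0, z_0)$ out of $\eco(\epi_K f) = \epi_K (\Keco f)$, whence $x_0 \notin \dom(\Keco f)$.

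Once $M_f = M_{\Keco f}$ is secured, the classes of admissible $M$-affine maps on both sides coincide, so the inclusion $\mathcal{H}_{\Keco f} \subseteq \mathcal{H}_f$ follows at once from the pointwise relation $(\Keco f)(x) \leq_K f(x)$. Conversely, for any $a \in \mathcal{H}_f$, the e-convexity of $M_f$ together with Proposition \ref{prop1} guarantees that $a$ is itself $K$-e-convex; being a minorant of $f$, it belongs to the family $\mathcal{G}$ of $K$-e-convex minorants of $f$. The supremum characterization $(\Keco f)(x) = \sup\{g_K(x) : g \in \mathcal{G}\}$, interpreted in the partial order $\leq_K$ on $\mathcal{P}^a_K(Z)$ (where suprema are intersections), then forces $(\Keco f)(x) \subseteq a(x)$, i.e., $a(x) \leq_K (\Keco f)(x)$ for every $x \in X$. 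This places $a$ in $\mathcal{H}_{\Keco f}$ and completes the proof. The whole argument is routine once the projection/separation step for the domain equality is handled, which is the only genuine obstacle.
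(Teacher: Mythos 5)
Your proposal is correct and follows exactly the route the paper intends: the paper omits the proof, deferring to the scalar case of \cite[Lemma 7]{MLVP2011}, and your argument is precisely that adaptation — establish $M_f = M_{\Keco f}$ via the separation/half-space step (the lift of $x^*$ to the nonzero functional $(x^*,0)$ on $X\times Z$ is the right device, and $x^*\neq 0$ is automatic when $\dom f \neq\emptyset$, the case $\dom f=\emptyset$ being trivial), then deduce the two inclusions from $\Keco f \leq_K f$, Proposition \ref{prop1}, and the supremum characterization of $\Keco f$.
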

\begin{lemma} \label{lemma7}
Let  $f:X \to \mathcal{P}(Z)$ be a set-valued function. If  K-$\eco f \equiv Z$ then necessarily $\mathcal{H}_f = \emptyset$.
\end{lemma}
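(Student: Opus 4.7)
The plan is to derive this as an immediate consequence of Lemma \ref{lemma2} combined with Remark \ref{remark1}. First I would invoke Lemma \ref{lemma2} to reduce the problem: since $\mathcal{H}_f = \mathcal{H}_{K\text{-}\eco f}$, it suffices to prove that $\mathcal{H}_{K\text{-}\eco f} = \emptyset$.

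Next, by hypothesis $(K\text{-}\eco f)(\bar x) = Z$ holds for every $\bar x \in X$, so in particular $K\text{-}\eco f$ takes the value $Z$ at some point of its domain (indeed at every point). I would then apply Remark \ref{remark1} with $K\text{-}\eco f$ playing the role of $f$, which directly yields $\mathcal{H}_{K\text{-}\eco f} = \emptyset$, and hence $\mathcal{H}_f = \emptyset$.

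For the paper to read smoothly, I would briefly recall the underlying reason behind the invocation of Remark \ref{remark1}: any hypothetical $a \in \mathcal{H}_{K\text{-}\eco f}$, being an $M_{K\text{-}\eco f}$-affine minorant, would satisfy $a(\bar x) = S_{(x^*,z^*)}(\bar x) + \tilde z$ for some $(x^*,z^*) \in X^* \times (K^*\setminus \{0\})$ and $\tilde z \in Z$ at any point $\bar x \in \dom (K\text{-}\eco f) \subseteq M_{K\text{-}\eco f}$. The minorant condition $a(\bar x) \leq_K (K\text{-}\eco f)(\bar x) = Z$ forces $Z \subseteq a(\bar x) + K = a(\bar x)$, contradicting the fact that $S_{(x^*,z^*)}(\bar x) + \tilde z$ is a proper open half-space in $Z$ (because $z^* \neq 0$).

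No real obstacle is expected: the statement is essentially a direct corollary of the two preceding results, and the only point requiring care is to verify that Remark \ref{remark1} can be transferred verbatim to $K\text{-}\eco f$, which is clear since that remark is formulated for an arbitrary set-valued function.
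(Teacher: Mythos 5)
Your proposal is correct and follows essentially the same route as the paper: the paper likewise passes from $a\in\mathcal{H}_f$ to the minorant condition $a(x)\leq_K(\Keco f)(x)=Z$ (via Lemma~\ref{lemma2}), concludes $a\equiv Z$, and derives the same contradiction that a set of the form $S_{(x^*,z^*)}(x)+\tilde z$ with $z^*\neq 0$ cannot equal $Z$ (the paper evaluates at $x=0$, you at an arbitrary $\bar x\in\dom(\Keco f)$, which is immaterial). Your packaging of the final contradiction as an application of Remark~\ref{remark1} to $\Keco f$ is a legitimate and slightly tidier way of saying the same thing.
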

\begin{proof}
If there exists $a \in \mathcal{H}_f$ as it is described in \eqref{c-affine}, then $a(x) \leq _K (\Keco f) (x)$, for all $x \in X$, and $a \equiv Z$, in particular, $a(0)=Z$, and it would imply that, taking into account that $z^* \neq 0$,  
\begin{equation*}
\lbrace z: \langle z,z^* \rangle < \langle \tilde z,z^* \rangle \rbrace=Z.\tag*{$\square$}
\end{equation*}
\end{proof}
\begin{theorem} \label{theorem1}
Let  $f:X \to \mathcal{P}(Z)$ be a set-valued function. The following statements are equivalent:
\begin{enumerate}[label=(\roman*)]
\item $\mathcal{H}_f \neq \emptyset$.
\item Either $K$-$\eco f$ is proper or $f \equiv \emptyset$.
\item $f$ has a proper $K$-e-convex minorant.
\end{enumerate}
\end{theorem}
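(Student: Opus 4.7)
I would prove the cyclic chain $\mathrm{(iii)} \Rightarrow \mathrm{(i)} \Rightarrow \mathrm{(ii)} \Rightarrow \mathrm{(iii)}$. The implications $\mathrm{(i)} \Rightarrow \mathrm{(ii)}$ and $\mathrm{(ii)} \Rightarrow \mathrm{(iii)}$ amount to unpacking definitions and invoking Lemmas \ref{lemma2} and \ref{lemma7}; the substantive content is $\mathrm{(iii)} \Rightarrow \mathrm{(i)}$, where one must manufacture an explicit $M_f$-affine minorant out of an arbitrary proper $K$-e-convex one.

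For $\mathrm{(iii)} \Rightarrow \mathrm{(i)}$, let $g$ be a proper $K$-e-convex minorant of $f$, choose $\bar x \in \dom g$, and pick $\bar z \in Z \setminus (g(\bar x)+K)$, so that $(\bar x,\bar z) \notin \epi_K g$. Since $\epi_K g$ is e-convex, Proposition~\ref{defeconv} yields $(y^*, z^*) \in X^* \times Z^*$ with
\begin{equation*}
\langle x-\bar x, y^* \rangle + \langle z-\bar z, z^* \rangle < 0 \quad \text{for every } (x,z) \in \epi_K g.
\end{equation*}
Next I would argue $z^* \in K^* \setminus \{0\}$: the invariance $\epi_K g + (\{0\} \times K) \subseteq \epi_K g$ forces $\langle k, z^* \rangle \leq 0$ for every $k \in K$, and $z^* \neq 0$ because otherwise evaluating the inequality at any $(\bar x, z) \in \epi_K g$ (which exists since $\bar x \in \dom g$) would give $0 < 0$. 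Picking $\tilde z \in Z$ with $\langle \tilde z, z^* \rangle = \langle \bar x, y^* \rangle + \langle \bar z, z^* \rangle$ (possible because $z^* \neq 0$) and setting
\begin{equation*}
a(x) := \begin{cases} S_{(y^*,\, z^*)}(x) + \tilde z & \text{if } x \in M_f,\\ \emptyset & \text{otherwise,}\end{cases}
\end{equation*}
produces an $M_f$-affine function. To check $a \leq_K f$ one notes that for $x \notin M_f$ one has $f(x) = \emptyset$ (as $\dom f \subseteq M_f$), while for $x \in M_f$ any $z \in f(x) + K \subseteq g(x) + K$ yields $(x,z) \in \epi_K g$, and the separating inequality rewrites precisely as $z \in a(x)$.

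For $\mathrm{(i)} \Rightarrow \mathrm{(ii)}$, suppose $\mathcal{H}_f \neq \emptyset$ and $f \not\equiv \emptyset$, and pick $a \in \mathcal{H}_f$. Since $a \leq_K f$, we have $\dom(\Keco f) \supseteq \dom f \neq \emptyset$. By Lemma~\ref{lemma2}, $a \in \mathcal{H}_{\Keco f}$, so if $(\Keco f)(x_0) = Z$ for some $x_0$, then $Z \subseteq a(x_0) + K = a(x_0)$; but $a(x_0)$ is either empty or a (nontrivial) open half-space, never equal to $Z$, a contradiction. Hence $\Keco f$ is proper. For $\mathrm{(ii)} \Rightarrow \mathrm{(iii)}$, if $f \equiv \emptyset$ then any constant $X$-affine function $g(x) := S_{(0,\, z^*)}(x) + \tilde z$ with $z^* \in K^* \setminus \{0\}$ is a proper $K$-e-convex minorant of $f$ by Proposition~\ref{prop1}; if $\Keco f$ is proper, then $\Keco f$ itself is the desired proper $K$-e-convex minorant of $f$, being $K$-e-convex by construction.

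The main obstacle, as anticipated, is the separation step in $\mathrm{(iii)} \Rightarrow \mathrm{(i)}$: one must simultaneously ensure that the separating functional's $Z$-component lies in $K^* \setminus \{0\}$ (so the resulting minorant fits the $C$-affine template) and that restricting its support from $\dom g$ down to $M_f$ does not destroy the minorization of $f$. Both concerns are resolved by exploiting the direction $\{0\} \times K$ inside $\rec(\epi_K g)$ and the inclusion $\dom f \subseteq M_f$, respectively.
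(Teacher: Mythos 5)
Your proposal is correct and follows essentially the same route as the paper: the substantive step $(iii)\Rightarrow(i)$ uses the identical separation argument (Proposition~\ref{defeconv} applied to a point outside $\epi_K g$, the recession direction $\{0\}\times K$ to force $z^*\in K^*\setminus\{0\}$, evaluation at $\bar x\in\dom g$ to force $z^*\neq 0$, and the resulting $M_f$-affine minorant), while your $(i)\Rightarrow(ii)$ and $(ii)\Rightarrow(iii)$ match the paper's use of Remark~\ref{remark1} and Lemma~\ref{lemma2} up to reordering the cycle. The only cosmetic differences are the order of the implications and your explicit choice of a proper $K$-e-convex minorant in the $f\equiv\emptyset$ case.
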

\begin{proof}
$(i) \Rightarrow (ii)$ According to Remark \ref {remark1} and Lemma \ref{lemma2},  $(\Keco f)(x)\neq Z$ for all $ x \in X$, and therefore, in the case $K$-$\eco f$ is improper, it would be   $\dom (\Keco f) = \emptyset$, and then 
\begin{equation*}
\emptyset = (\Keco f) (x) \leq_K f(x),
\end{equation*}  
for all $x \in X$, which means that $f \equiv \emptyset$.\\
$(ii) \Rightarrow (iii)$ $\Keco f$ will be a proper $K$-e-convex minorant of $f$ in one case, and if $ f \equiv \emptyset$, any proper $K$-e-convex set-valued function will be a minorant of $f$.\\
$(iii) \Rightarrow (i)$ If $f \equiv \emptyset$ then $\mathcal{H}_f = \{f\}$. Now, consider the case where $\dom f \neq \emptyset$. Let $g$ be a proper $K$-e-convex minorant of $f$. Take a point $ x_0 \in \dom g$ and $z_0 \in Z$ such that $(x_0, z_0) \notin \epi_K g$. Since it is an e-convex set in $X \times Z$, by Proposition \ref{defeconv}, there exists $(x^* , z^*) \in (X^* \times Z^*)\setminus \{0\}$ such that
\begin{equation} \label{eq2}
\langle x_0, x^* \rangle + \langle z_0, z^* \rangle > \langle x, x^* \rangle + \langle z, z^* \rangle,
\end{equation}
for all $(x,z) \in \epi_K g$, hence $z^* \neq 0$.  In particular, for any fix point $z_1 \in g(x_0)$ and for all $k \in K$,
\begin{equation*}
\langle x_0, x^* \rangle + \langle z_0, z^* \rangle > \langle x_0, x^* \rangle + \langle z_1+k, z^* \rangle 
\end{equation*}
and $z^* \in K^* \setminus \{0\}$. We consider a point $\tilde z \in Z$ such that $\langle \tilde z, z^* \rangle \geq \langle x_0, x^* \rangle + \langle z_0, z^* \rangle$ and take
\begin{equation*}
a(x) =\left\{
\begin{array}{ll}S_{(x^*,z^*)}(x)+\tilde z & \text{if } x\in M_f, \\
\emptyset & \text{otherwise.}
\end{array}
\right.
\end{equation*}
Since $\epi_K f \subseteq \epi_K g$, we will have, for all $(x,z) \in \epi_K f$, $\langle x, x^* \rangle + \langle z, z^* \rangle < \langle \tilde z, z^* \rangle$, according to \eqref {eq2}, and $z-\tilde z \in S_{(x^*,z^*)}(x)$, then for all $x \in \dom f$, $f(x) - \tilde z \subseteq S_{(x^*,z^*)}(x)$ and $a \in \mathcal{H}_f $.\qed
\end{proof}
Taking into account that $\Keco f=f_K$ whenever $f$ is $K$-e-convex and $f \equiv \emptyset$ is equivalent to  $f_K \equiv \emptyset$, the following corollary comes directly.
\begin{corollary} \label{cor1}
Let  $f:X \to \mathcal{P}(Z)$ be a K-e-convex set-valued function. Then $\mathcal{H}_f \neq \emptyset$ if and only if  $ f_K$ is proper or $f_K \equiv \emptyset$.
\end{corollary}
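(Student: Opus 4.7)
The plan is to derive this corollary as an immediate specialization of Theorem \ref{theorem1}. The sentence preceding the statement already spells out the two identifications that are required, so the proof reduces to substituting them into the equivalence (i) $\Leftrightarrow$ (ii) of that theorem.

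First, I would invoke the identity $\Keco f = f_K$, which was pointed out in the last paragraph of Section \ref{sec2}: $f$ being $K$-e-convex means that $\epi_K f$ is already an e-convex set, so $\eco(\epi_K f) = \epi_K f$, whence $\epi_K(\Keco f) = \epi_K f_K$ and therefore $\Keco f = f_K$. Consequently, the condition ``$\Keco f$ is proper'' appearing in Theorem \ref{theorem1}(ii) becomes simply ``$f_K$ is proper''. Second, I would verify that $f \equiv \emptyset$ is equivalent to $f_K \equiv \emptyset$. The nontrivial direction uses $0 \in K$, which is guaranteed by the assumption $\{0\} \varsubsetneq K$: if $f(x) \neq \emptyset$ then $f_K(x) = f(x) + K \supseteq f(x) \neq \emptyset$; the reverse direction is the convention $\emptyset + K = \emptyset$ recalled in Section \ref{sec2}.

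Substituting both reformulations into the equivalence (i) $\Leftrightarrow$ (ii) of Theorem \ref{theorem1} yields exactly the statement of the corollary. No step is expected to present any obstacle; the whole argument is a dictionary translation of Theorem \ref{theorem1} into the language of $f_K$, made possible by the fact that $K$-e-convexity collapses $\Keco f$ to $f_K$.
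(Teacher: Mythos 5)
Your proposal is correct and follows exactly the paper's own route: the corollary is stated there as an immediate consequence of Theorem \ref{theorem1}, using precisely the two identifications you spell out, namely $\Keco f = f_K$ for a $K$-e-convex $f$ and the equivalence of $f \equiv \emptyset$ with $f_K \equiv \emptyset$. Your write-up merely makes explicit the small verifications the paper leaves implicit.
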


We recall that, if  $f:X \to \mathcal{P}(Z)$ is a set-valued function and $\lbrace a_i: i \in I \rbrace$ is an arbitrary collection of set-valued functions where $a_i : X \to \mathcal{P}(Z)$, for all $i \in I$, saying that $f$ is the pointwise supremum of  $\lbrace a_i: i \in I \rbrace$,  $$f(x)+K=\sup_{ I} \lbrace { a_i (x)+K, \leq_K} \rbrace,$$  means that $f(x)+K=\bigcap_{ I} \lbrace { a_i (x)+K} \rbrace$, for all $x  \in X$. It is clear that we can also take a collection  $\lbrace a_i: i \in I \rbrace$ where  $a_i : X \to \mathcal{P}^a_K(Z)$.
\begin{remark}
Let us observe that  $f$ is the pointwise supremum of  $\lbrace a_i: i \in I \rbrace$ if $\epi_K f=\bigcap_{ I}{ \epi_K a_i }$.
\end{remark}
\begin{theorem} \label{theorem2}
Let  $f:X \to \mathcal{P}(Z)$ be a set-valued function. The following statements are equivalent:
\begin{enumerate}[label=(\roman*)]
\item $f$ is the pointwise supremum of its $M_{f}$-affine minorants.
\item $ f$ is K-e-convex  and either $f_K$ proper, $f_K \equiv \emptyset$ or $f_K \equiv Z$.
\end{enumerate}
\end{theorem}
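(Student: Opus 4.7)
For $(i) \Rightarrow (ii)$, I would first verify that $f$ is $K$-e-convex. Each $a \in \mathcal{H}_f$ has an e-convex $K$-epigraph by Proposition~\ref{prop1}, and since the class of e-convex sets is stable under arbitrary intersection (the empty intersection being $X \times Z$, which is e-convex), the identity $\epi_K f = \bigcap_{a \in \mathcal{H}_f} \epi_K a$, which is exactly what the pointwise-supremum hypothesis says, shows that $\epi_K f$ is e-convex. Then I would split on $\mathcal{H}_f$: if $\mathcal{H}_f = \emptyset$, the empty-supremum convention forces $f_K \equiv Z$; if $\mathcal{H}_f \neq \emptyset$, Corollary~\ref{cor1}, now applicable, yields that $f_K$ is either proper or identically $\emptyset$.

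For $(ii) \Rightarrow (i)$, I would handle the two degenerate cases first. If $f_K \equiv Z$, Remark~\ref{remark1} combined with $\mathcal{H}_f = \mathcal{H}_{f_K}$ from Remark~\ref{rem3} gives $\mathcal{H}_f = \emptyset$, so the empty supremum recovers $f_K$. If $f_K \equiv \emptyset$, then $f \equiv \emptyset$ and $M_f = \eco(\emptyset) = \emptyset$, forcing every $M_f$-affine function to be $\equiv \emptyset$, and again the supremum agrees with $f_K$.

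The substantive case is $f_K$ proper. One inclusion, $f_K(x) \subseteq \bigcap_{a \in \mathcal{H}_f} a(x)$, is immediate from the minorant condition, so the real task is to exhibit, for each $(x_0, z_0) \notin \epi_K f$, an $a \in \mathcal{H}_f$ with $z_0 \notin a(x_0)$. Since $\epi_K f$ is e-convex, Proposition~\ref{defeconv} supplies a nonzero $(x^*, z^*)$ with $\langle x, x^* \rangle + \langle z, z^* \rangle < \langle x_0, x^* \rangle + \langle z_0, z^* \rangle$ for all $(x,z) \in \epi_K f$; plugging in $(x, z + tk)$ for $k \in K$ and letting $t \to \infty$ forces $z^* \in K^*$, as in the proof of Theorem~\ref{theorem1}. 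The delicate point, which I expect to be the principal obstacle, is the subcase $z^* = 0$: here the separation collapses to a strict separation of $x_0$ from $\dom f$ by $x^*$, so the open half-space $\{x : \langle x, x^* \rangle < \langle x_0, x^* \rangle\}$ is e-convex, contains $\dom f$ and excludes $x_0$, whence $x_0 \notin M_f$ and any $a \in \mathcal{H}_f$, which is nonempty by Corollary~\ref{cor1}, satisfies $a(x_0) = \emptyset \not\ni z_0$. When instead $z^* \in K^* \setminus \{0\}$, I would mimic the construction in Theorem~\ref{theorem1}, choosing $\tilde z$ with $\langle \tilde z, z^* \rangle = \langle x_0, x^* \rangle + \langle z_0, z^* \rangle$ and taking $a$ to be the $M_f$-affine function defined by \eqref{c-affine}; the strict separation makes $a$ a minorant of $f$, while the equality at $(x_0, z_0)$ violates the strict inequality defining $S_{(x^*, z^*)}(x_0) + \tilde z$, giving $z_0 \notin a(x_0)$.
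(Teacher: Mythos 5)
Your proposal is correct and follows essentially the same route as the paper's proof: both directions use the identification of the pointwise supremum with the intersection of the $K$-epigraphs, Corollary~\ref{cor1} for the nondegenerate cases, and in $(ii)\Rightarrow(i)$ the same separation argument producing an $M_f$-affine minorant that excludes $(x_0,z_0)$. The only cosmetic difference is that the paper disposes of the subcase $x_0\notin M_f$ at the outset and then derives a contradiction from $z^*=0$, whereas you fold that reduction into the $z^*=0$ branch; the content is identical.
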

\begin{proof}
$(i) \Rightarrow (ii)$ In the case  $\mathcal{H}_f=\emptyset$,  then  $f_K \equiv Z$ (recall that the supremum of an empty collection of sets in $\mathcal{P}^{a}_K(Z)$ is $Z$). Clearly, $\epi_K f=X \times Z$, which is e-convex.\\
 Otherwise, if $\mathcal{H}_f\neq \emptyset$, since $ \epi_K f = \bigcap_{ \mathcal{H}_f}  \epi_K a $ and it is an e-convex set (recall Remark \ref{rem3}), then $f$ is $K$-e-convex, and by Corollary \ref{cor1}, either $ f_K$ is proper or $f_K \equiv \emptyset$.\\
 $(ii) \Rightarrow (i)$  If $f_K \equiv Z$, by Remarks \ref{rem3} and \ref{remark1}, $\mathcal{H}_f = \emptyset$ and $f$ is the pointwise supremum of an empty family of minorants.\\
  If $f_K \equiv \emptyset$, then $\mathcal{H}_f= \{ f\}$. Finally, assume that $f$ is $K$-e-convex, with $f_K$ proper. According again to Corollary \ref{cor1}, $\mathcal{H}_f \neq  \emptyset$. 
 Since $a(x) \leq_K f(x)$, for all $x \in X, a \in \mathcal{H}_f$,  it follows $ \epi_K f \subseteq \bigcap_{ \mathcal{H}_f}  \epi_K a $. For the converse inclusion, we will show that if $(x_0, z_0) \notin \epi_K f$, then there exists $a \in \mathcal{H}_f$ such that  $(x_0, z_0) \notin \epi_K a$. \\
We can assume that $x_0 \in \mathcal M_f$, otherwise $a(x_0)= \emptyset$, for all $a \in \mathcal{H}_f$.\\
 Since $\epi_K f$ is e-convex, according to Proposition \ref{defeconv}, we can find $(x^* , z^*) \in (X^* \times Z^*)\setminus \{0\}$ such that
\begin{equation} \label{eq3}
\langle x_0, x^* \rangle + \langle z_0, z^* \rangle > \langle x, x^* \rangle + \langle z, z^* \rangle,
\end{equation}
for all $(x,z) \in \epi_K f$. We ensure that $z^* \in K^* \setminus \{0\}$.\\
Firstly, we will see that $z^* \neq 0$. If not, $\langle x_0, x^* \rangle  > \langle x, x^* \rangle,$
for all $x \in \dom f$, and it follows that $x_0 \notin M_f$.\\
Now, if $z^* \notin K^*$, it would exist $k_0 \in K$ such that $\langle k_0, z^* \rangle >0$, and considering that, for all $(x,z) \in \epi_K f$ and $\lambda >0$, $(x, z+ \lambda k_0) \in \epi_K f$, inequality (\ref{eq3}) will not hold for $\lambda$ large enough, which leads us to conclude that $z^* \in K^* \setminus \{0\}$.
 We consider now  a point $\tilde z \in Z$ such that $\langle \tilde z, z^* \rangle = \langle x_0, x^* \rangle + \langle z_0, z^* \rangle$ and take
\begin{equation*}
a(x) =\left\{
\begin{array}{ll}
S_{(x^*,z^*)}(x)+\tilde z & \text{if } x\in M_f, \\
\emptyset & \text{otherwise.}
\end{array}
\right.
\end{equation*}
 It is easy to check that $a \in \mathcal{H}_f$, nevertheless, 
 \begin{equation*}
 \langle x_0, x^* \rangle + \langle z_0-\tilde z, z^* \rangle  =0,
 \end{equation*}
 and $z_0  \notin a(x_0)$, hence $(x_0, z_0) \notin  \epi_K a$. 
  \qed
\end{proof}
\begin{corollary} \label{cor2}
Let  $f:X \to \mathcal{P}(Z)$ having a proper K-e-convex minorant. Then, for all $x \in X$,
\begin{equation*}
(K \text{-}\eco f)(x)=\sup_{ \mathcal{H}_f}\{ a(x), \leq_K  \}. 
\end{equation*}
\end{corollary}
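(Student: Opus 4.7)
My plan is to deduce the claim by specialising Theorem \ref{theorem2} to the function $\Keco f$ and then translating the conclusion back to $f$ via Lemma \ref{lemma2}. The point is that $\Keco f$, unlike $f$ itself, is guaranteed to be $K$-e-convex by construction, so the heavy lifting already performed in Theorem \ref{theorem2} becomes directly available as soon as we dispose of the alternative hypothesis on its $K$-epigraph.

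First I would invoke Theorem \ref{theorem1}: the existence of a proper $K$-e-convex minorant of $f$ forces either $\Keco f$ to be proper or $f \equiv \emptyset$. The degenerate subcase $f \equiv \emptyset$ is immediate: here $\Keco f \equiv \emptyset$, Remark \ref{remark1} gives $\mathcal{H}_f = \{f\}$, and the claim collapses to $\emptyset = \sup\{\emptyset\}$. In the main subcase $\Keco f$ is proper, and since $\Keco f$ is $K$-e-convex by construction we have $(\Keco f)_K = \Keco f$, so $(\Keco f)_K$ is also proper. This places $\Keco f$ squarely inside the second alternative of statement $(ii)$ of Theorem \ref{theorem2}, and the theorem applied to $g := \Keco f$ yields
\begin{equation*}
(\Keco f)(x) = \sup_{a \in \mathcal{H}_{\Keco f}} \{ a(x), \leq_K \}, \qquad x \in X,
\end{equation*}
where the extra ``$+K$'' appearing in the general pointwise-supremum convention is absorbed because $(\Keco f)(x) \in \mathcal{R}_K(Z)$ and each $a(x) \in \mathcal{P}^a_K(Z)$. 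Finally, Lemma \ref{lemma2} gives $\mathcal{H}_f = \mathcal{H}_{\Keco f}$, so substituting on the right-hand side yields exactly the stated identity.

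I do not anticipate a serious obstacle; the argument is essentially a matter of assembling Theorem \ref{theorem1}, Theorem \ref{theorem2}, and Lemma \ref{lemma2} in the right order. The one place that needs a moment of care is verifying the hypothesis of Theorem \ref{theorem2} for $\Keco f$, which rests on the short but crucial observation that $K$-e-convexity of any set-valued function $g$ forces $g_K = g$, so the ``$g_K$ proper / $\equiv \emptyset$ / $\equiv Z$'' trichotomy for $g = \Keco f$ is settled by Theorem \ref{theorem1} alone.
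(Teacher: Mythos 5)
Your proof is correct and follows essentially the same route as the paper: Theorem \ref{theorem1} to settle the dichotomy, Theorem \ref{theorem2} applied to $\Keco f$ (using $(\Keco f)_K=\Keco f$), and Lemma \ref{lemma2} to identify $\mathcal{H}_{\Keco f}$ with $\mathcal{H}_f$, with the case $f\equiv\emptyset$ handled directly. One minor caveat: the identity $g_K=g$ is \emph{not} a consequence of $K$-e-convexity for a general $g:X\to\mathcal{P}(Z)$ (e.g.\ $g\equiv\{0\}$ with $K=\R_{+}$ is $K$-e-convex yet $g\neq g_K$); for $g=\Keco f$ it holds simply because the hull is defined with values in $\mathcal{R}_K(Z)$, which is all you actually need.
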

\begin{proof}
By Theorem \ref{theorem1}, either $ \Keco f$ is proper or $ f \equiv \emptyset$. In the first case, taking into account that  $ \Keco f=( \Keco f)_K$,  by Theorem \ref{theorem2},  $ \Keco f$ is the pointwise supremum of its $M_{\Keco f}$-affine minorants, which is the same set as the set of the $M_{f}$-affine minorants of $f$, according to Lemma \ref{lemma2}.\\
In the case $f \equiv \emptyset$, $f$ is $K$-e-convex, hence $\Keco f =f_K= f$ and moreover $\mathcal{H}_f=\{f \}$. \qed
\end{proof}

\begin{definition}
Let $\mathcal{C}$ be the set of all e-convex subsets in $X$. We say that a set-valued function $a: X \to \mathcal{R}_K(Z)$ is $\mathcal{C}$\emph{-affine} if there exists $C \in \mathcal{C}$ such that $a$ is $C$-affine.

For a set-valued function $f:X \to \mathcal{P}(Z)$, we will denote by $\mathcal{C}_f$ the set of all its  $\mathcal{C}$-affine minorants.
\end{definition}
\begin{theorem} \label{theorem3}
Let  $f:X \to \mathcal{P}(Z)$. The following statements are equivalent:
\begin{enumerate}[label=(\roman*)]
\item $f$ is the pointwise supremum of its $\mathcal{C}$-affine minorants.
\item $ f$ is $K$-e-convex and either $f_K$ is proper, $f_K \equiv \emptyset$ or $f_K \equiv Z$.
\end{enumerate} 
\end{theorem}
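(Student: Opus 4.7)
The plan is to mimic Theorem \ref{theorem2}, exploiting that every $M_f$-affine minorant of $f$ is automatically a $\mathcal{C}$-affine minorant: since $M_f = \eco(\dom f)$ is e-convex, $M_f \in \mathcal{C}$, so $\mathcal{H}_f \subseteq \mathcal{C}_f$. A second observation I would use repeatedly is that any $\mathcal{C}$-affine function $a$ satisfies $a_K(x) \neq Z$ for every $x \in X$: either $a(x) = \emptyset$, or $a(x)+K = a(x) = S_{(x^*,z^*)}(x) + \tilde z$ is a proper translate of an open half-space (recall $z^* \in K^*\setminus\{0\}$).

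For $(ii) \Rightarrow (i)$ I would split into three cases. When $f_K$ is proper, Theorem \ref{theorem2} already gives $\epi_K f = \bigcap_{a \in \mathcal{H}_f}\epi_K a$; combined with the inclusions $\mathcal{H}_f \subseteq \mathcal{C}_f$ and $\epi_K f \subseteq \epi_K a$ for every $a \in \mathcal{C}_f$, this sandwiches $\bigcap_{a \in \mathcal{C}_f}\epi_K a$ between two copies of $\epi_K f$ and equality follows. When $f_K \equiv \emptyset$, the function $a \equiv \emptyset$ is $\emptyset$-affine (with $\emptyset \in \mathcal{C}$, e-convex vacuously) and already forces the pointwise supremum to be $\emptyset = f_K$. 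When $f_K \equiv Z$, the second observation above shows that $\mathcal{C}_f = \emptyset$, and the empty-family convention $\sup\emptyset = Z$ in $\mathcal{P}^a_K(Z)$ matches $f_K$.

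For $(i) \Rightarrow (ii)$, the $K$-e-convexity of $f$ is immediate: by Proposition \ref{prop1} each $\mathcal{C}$-affine function has e-convex $K$-epigraph, and an arbitrary intersection of e-convex sets is e-convex. For the trichotomy I would argue by contradiction. Suppose $f_K$ is neither proper nor identically $\emptyset$ nor identically $Z$. Then $\dom f \neq \emptyset$ and there is some $x_0 \in \dom f$ with $f_K(x_0) = Z$, while some $x_1 \in X$ satisfies $f_K(x_1) \neq Z$. Any hypothetical $a \in \mathcal{C}_f$ would require $Z = f_K(x_0) \subseteq a_K(x_0)$, forcing $a_K(x_0) = Z$, which the second observation in the first paragraph rules out. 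Thus $\mathcal{C}_f = \emptyset$, its pointwise supremum is the constant function $Z$, and (i) would force $f_K(x_1) = Z$, contradicting our choice of $x_1$.

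The proof is essentially parallel to that of Theorem \ref{theorem2}, and the only mild obstacle is to verify that the two empty-family conventions (vacuity of $\mathcal{C}_f$ and the supremum $\sup\emptyset = Z$ in $\mathcal{P}^a_K(Z)$) line up correctly with the three degenerate possibilities for $f_K$; all genuinely topological work has already been carried out in the proof of Theorem \ref{theorem2} and Proposition \ref{prop1}.
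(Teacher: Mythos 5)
Your proposal is correct and follows essentially the same route as the paper: the key steps in both are the inclusion $\mathcal{H}_f \subseteq \mathcal{C}_f$ combined with Theorem \ref{theorem2} to sandwich $\bigcap_{\mathcal{C}_f}\epi_K a$ between two copies of $\epi_K f$, and the observation (the paper cites the reasoning of Remark \ref{remark1}) that no $\mathcal{C}$-affine function can take the value $Z$, which settles the degenerate cases. Your separate treatment of $f_K\equiv\emptyset$ and the contrapositive phrasing of $(i)\Rightarrow(ii)$ are only cosmetic deviations.
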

\begin{proof}
$(i) \Rightarrow (ii)$
It is clear that $f$ is $K$-e-convex. Now, if $\mathcal{C}_f=\emptyset$, then $f_K \equiv Z$. In the case  $\mathcal{C}_f \neq \emptyset$, if $f_K \not \equiv \emptyset$, it must be proper, otherwise there would exist $x \in \dom f_K$ satisfying $f_K(x)=Z$ and hence for all $a \in \mathcal{C}_f$ it would be $a(x)=Z$ and it is not possible (we can use the same reasoning than the one used in Remark \ref{remark1} for $\mathcal{H}_f$). \\
$(ii) \Rightarrow (i)$ In the case $f_K \equiv Z$, we will have $\mathcal{C}_f = \emptyset$  and $f$ is the pointwise supremum of an empty family.
On the other hand, in the cases $f_K$ proper or $f_K \equiv \emptyset$, we will have $ \emptyset \neq \mathcal{H}_f  \subseteq \mathcal{C}_f$, and by Theorem \ref{theorem2}, we obtain
\begin{equation*}
\epi_K f =  \bigcap_{ \mathcal{H}_f}  \epi_K a \supseteq \bigcap_{\mathcal{C}_f}  \epi_K a \supseteq \epi_K f. \tag*{$\square$}
\end{equation*} 
\end{proof}
\begin{definition}
We say that a function $a: X \to \mathcal{R}_K(Z)$ is \emph {e-affine} if 
\begin{equation*}
a(x) =\left\{
\begin{array}{ll}
S_{(x^*,z^*)}(x)+\tilde z & \text{if } \langle x, y^* \rangle < \alpha, \\
\emptyset & \text{otherwise,}
\end{array}
\right.
\end{equation*}
for some $x^*,y^* \in X^*, z^*\in K^*\setminus \{0\}$, $\tilde z \in Z$ and $\alpha \in \R$.
For a set-valued function $f:X \to \mathcal{P}(Z)$, we will denote by $\mathcal{E}_f$ the set of all the e-affine minorants of $f$.
\end{definition}
\begin{theorem} \label{theorem4}
Let  $f:X \to \mathcal{P}(Z)$ a set-valued function. The following statements are equivalent:
\begin{enumerate}[label=(\roman*)]
\item $f$ is the pointwise supremum of its e-affine minorants.
\item $f$ is $K$-e-convex and either $ f_K$ is proper,   $f_K \equiv \emptyset$ or $f_K\equiv Z$.
\end{enumerate}
\end{theorem}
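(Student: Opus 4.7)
My plan is to prove the two implications separately, using Theorem~\ref{theorem2} (the pointwise-supremum characterization via $M_f$-affine minorants) as the main lever. The key observation is that every e-affine function is a $\mathcal{C}$-affine function whose domain is an open half-space, so by Proposition~\ref{prop1} its $K$-epigraph is e-convex, and in particular $\mathcal{E}_f \subseteq \mathcal{C}_f$.

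For $(i) \Rightarrow (ii)$, the $K$-e-convexity of $f$ follows immediately from $\epi_K f = \bigcap_{a \in \mathcal{E}_f} \epi_K a$ being an intersection of e-convex sets. For the case distinction on $f_K$, I would use the fact that for any e-affine $a$ and any $x \in X$, the set $a(x) + K$ is either empty (when $x$ is outside the half-space domain) or a proper open half-space of $Z$ (when $x$ is inside), in particular never equal to $Z$. Hence if $f_K(\bar x) = Z$ at some $\bar x$, no $a \in \mathcal{E}_f$ could satisfy $f_K(\bar x) \subseteq a(\bar x) + K$; so $\mathcal{E}_f = \emptyset$ and the pointwise-supremum hypothesis (supremum over an empty family being $Z$) gives $f_K \equiv Z$. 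Otherwise $f_K(x) \neq Z$ for every $x$, leaving $f_K$ proper or $f_K \equiv \emptyset$.

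For $(ii) \Rightarrow (i)$, the cases $f_K \equiv Z$ and $f_K \equiv \emptyset$ I would dispatch as in Theorems~\ref{theorem2} and~\ref{theorem3}: in the first, $\mathcal{E}_f = \emptyset$ and the supremum of the empty family equals $Z$; in the second ($f \equiv \emptyset$), for every $x \in X$ I exhibit an e-affine minorant whose open half-space domain omits $x$ (using $X^* \neq \{0\}$), giving $a(x) = \emptyset$ and hence an empty pointwise intersection. The central case is $f_K$ proper. Here I would apply Theorem~\ref{theorem2} to write $\epi_K f = \bigcap_{a \in \mathcal{H}_f} \epi_K a$, and then express each $a \in \mathcal{H}_f$ (with data $(x^*, z^*, \tilde z)$) as a further pointwise supremum of e-affine minorants. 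Since $M_f$ is e-convex, Proposition~\ref{defeconv} yields $M_f = \bigcap_{H \in \mathcal{U}} H$, with $\mathcal{U}$ the collection of open half-spaces of $X$ containing $M_f$. For each $H \in \mathcal{U}$ I define $a_H$ to be the e-affine function with the same data $(x^*, z^*, \tilde z)$ but with domain $H$; the inclusion $\dom f \subseteq M_f \subseteq H$ ensures $a_H \in \mathcal{E}_f$ (trivially on $X \setminus \dom f$, and via $a$ on $M_f$), while $a(x) = \bigcap_{H \in \mathcal{U}} a_H(x)$ holds by construction. Nesting the two intersections then identifies $\epi_K f$ with $\bigcap_{b \in \mathcal{E}_f} \epi_K b$.

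The main obstacle is the construction $a \leadsto \{a_H\}_{H \in \mathcal{U}}$: one must verify that enlarging the domain of an $M_f$-affine minorant from $M_f$ to an arbitrary open half-space $H$ preserves the minorant inequality. This hinges on the observation that $f$ is empty outside $\dom f \subseteq M_f$, so the minorant condition is vacuous there, and on $M_f$ the two functions coincide. A secondary subtlety, absent in Theorem~\ref{theorem2}, is the $f \equiv \emptyset$ case, where one must produce genuine e-affine witnesses at each point rather than appealing to a singleton $\mathcal{E}_f = \{f\}$.
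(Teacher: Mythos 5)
Your proof is correct and follows essentially the same route as the paper: both directions hinge on decomposing the e-convex domain of an affine minorant into open half-spaces and restricting the affine data to each half-space, which is exactly the paper's construction of the functions $a_t$ with $\dom a = \bigcap_{T_a} H^-_t$. The only (harmless) differences are that you feed this construction directly with the $M_f$-affine minorants from Theorem~\ref{theorem2} instead of passing through the $\mathcal{C}$-affine minorants of Theorem~\ref{theorem3}, and that you treat the $f\equiv\emptyset$ case by explicit e-affine witnesses rather than folding it into the general argument.
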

\begin{proof}
$(i) \Rightarrow (ii)$ The proof is similar to the proof of $(i) \Rightarrow (ii)$ in Theorem \ref{theorem3}. Just change $\mathcal{C}_f$ by $\mathcal{E}_f$.\\
$(ii) \Rightarrow (i)$ In the case  $f_K \equiv Z$ we have $\mathcal{E}_f=\emptyset$.  Let us assume then that  either $ f_K$ is proper or $f_K\equiv Z$. By Corollary \ref{cor1} we have $ \emptyset \neq \mathcal{H}_f \subseteq \mathcal{C}_f$ and by Theorem \ref{theorem3}, $\bigcap_{\mathcal{C}_f}  \epi_K a = \epi_K f$. Now, take any function $a \in \mathcal{C}_f$, and write $\dom a = \bigcap_{ T_a} H^-_t$ expressing this e-convex set as an intersection of open half-spaces. We define, for all $t \in T_a$, 
\begin{equation*}
a_t(x) =\left\{
\begin{array}{ll}
S_{(x^*,z^*)}(x)+\tilde z  & \text{if } x \in H^-_t, \\
\emptyset & \text{otherwise,}
\end{array}
\right.
\end{equation*}
where $x^*, z^*$ and $\tilde z$ are the data defining the function $a$. It is easy to check that $\epi_K a= \bigcap_{ T_a} \epi_K a_t$ and then, denoting by $S =\bigcup_{\mathcal{C}_f} \lbrace a_t, t \in T_a\rbrace$, we have $S \subset \mathcal{E}_f$ and so $\mathcal{E}_f \neq \emptyset$ and
\begin{equation*}
 \epi_K f = \bigcap_{ \mathcal{C}_f}\left \{ \bigcap_{T_a}  \epi_K a_t \right \} =\bigcap_{ S} \epi_K a \supseteq \bigcap_{\mathcal{E}_f} \epi_K a \supseteq \epi_K f.\tag*{$\square$} 
 \end{equation*}
\end{proof}
\begin{remark}
Let us observe that, as a result of the above proof, $\mathcal{E}_f \neq \emptyset$ whenever $f$ is $K$-e-convex and $f_K$ is proper.
\end{remark}
\begin{corollary}
Let  $f:X \to \mathcal{P}(Z)$ be a set-valued function having a proper $K$-e-convex minorant. Then $K$-$ \eco f$ is the pointwise supremum of its e-affine minorants.
\end{corollary}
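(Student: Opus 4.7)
The plan is to reduce the statement to a direct application of Theorem \ref{theorem4} to the function $\Keco f$ itself, rather than to $f$. So the strategy is first to show that $\Keco f$ satisfies the hypotheses in condition (ii) of Theorem \ref{theorem4}, and then to invoke that theorem to conclude.

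First I would note that $\Keco f$ is $K$-e-convex by construction, since its $K$-epigraph is by definition an e-convex set, so $(\Keco f)_K = \Keco f$. Next, I would appeal to Theorem \ref{theorem1}: the assumption that $f$ admits a proper $K$-e-convex minorant is condition (iii) of that theorem, and it therefore implies (ii), namely that either $\Keco f$ is proper or $f\equiv\emptyset$. In the former case, $(\Keco f)_K=\Keco f$ is proper. In the latter case, $\epi_K f=\emptyset$, hence $\eco(\epi_K f)=\emptyset$, so $\Keco f\equiv\emptyset$ and thus $(\Keco f)_K\equiv\emptyset$. Either way, the hypothesis (ii) of Theorem \ref{theorem4} is met with $\Keco f$ playing the role of $f$.

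Applying Theorem \ref{theorem4} to the set-valued function $\Keco f$ then yields that $\Keco f$ is the pointwise supremum of its e-affine minorants $\mathcal{E}_{\Keco f}$, which is exactly the claim. (If one wished to phrase things in terms of $\mathcal{E}_f$ instead, one could additionally observe, in analogy with Lemma \ref{lemma2}, that every e-affine minorant of $\Keco f$ is an e-affine minorant of $f$ and vice versa, since any e-affine function is itself $K$-e-convex and Lemma \ref{lemma2} applied at the level of minorant comparison transfers; but this is not needed for the statement as written.)

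There is essentially no obstacle: the whole content is the check that the dichotomy supplied by Theorem \ref{theorem1} (proper $\Keco f$ versus $f\equiv\emptyset$) matches two of the three admissible cases in Theorem \ref{theorem4}, so the result is a short corollary. The only small point requiring care is verifying that $f\equiv\emptyset$ forces $\Keco f\equiv\emptyset$, which follows from $\eco(\emptyset)=\emptyset$ applied to the $K$-epigraph.
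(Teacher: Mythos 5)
Your proof is correct and follows essentially the same route as the paper: invoke Theorem \ref{theorem1} to get the dichotomy ($\Keco f$ proper or $f\equiv\emptyset$) and then apply Theorem \ref{theorem4} to $\Keco f$, using $(\Keco f)_K=\Keco f$. The only difference is that the paper reads ``its e-affine minorants'' as $\mathcal{E}_f$ and therefore also records the identity $\mathcal{E}_f=\mathcal{E}_{\Keco f}$ (by the argument of Lemma \ref{lemma2}) -- exactly the optional step you flag in your parenthetical -- and it disposes of the $f\equiv\emptyset$ case directly rather than by a second appeal to Theorem \ref{theorem4}.
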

\begin{proof}
The proof of $\mathcal{E}_f = \mathcal{E}_{\Keco f}$ is similar to that one of $\mathcal{H}_f = \mathcal{H}_{\Keco f}$ (we recall Lemma \ref{lemma2}). Now, according to Theorem \ref{theorem1}, either $\Keco f=(\Keco f)_K$ is proper or $f \equiv \emptyset$. In the first case, by Theorem \ref{theorem4} it follows that $\Keco f$ is the pointwise supremum of its e-affine minorants. On the other hand, if $f \equiv \emptyset$, it would be $f=f_K=\Keco f$ with $\mathcal{E}_f =\lbrace f \rbrace$. \qed
\end{proof}
\section{C-conjugating set-valued functions}\label{sec4}
\label{section:C-conjugation}
We will generalize the conjugation pattern for extended real-valued functions, described in \cite{MLVP2011}, suitable for e-convex functions. It is based on the generalized convex conjugation theory introduced by Moreau \cite{Mor1970}. We give a brief description of it.  Let us consider the space $W:=X^{\ast }\times X^{\ast }\times \R$ with \emph{the coupling functions }$c:X\times W\rightarrow \overline{\R}$ and $c^{\prime }:W\times X\rightarrow \overline{\R}$ given by
\begin{equation*}
c(x,(x^{\ast },y^{\ast },\alpha ))=c^{\prime }\left( (x^{\ast },y^{\ast},\alpha ),x\right) :=\left\{
\begin{array}{ll}
\left\langle x,x^{\ast }\right\rangle & \text{if }\left\langle x,y^{\ast}\right\rangle <\alpha,\\
+\infty & \text{otherwise.}
\end{array}
\right.
\end{equation*}
Given two functions $f:X\rightarrow \overline{\R}$ and $g:W\rightarrow \overline{\R}$, the \emph{$c$-conjugate} of $f$, $f^{c}:W\rightarrow \overline{\R}$, and the \emph{c$^{\prime }$-conjugate} of $g$, $g^{c^{\prime}}:X\rightarrow \overline{\R}$, are defined
\begin{eqnarray}\label{eq6}
f^{c}(x^{\ast },y^{\ast },\alpha)& :=& \supp_{x\in X}\left\{ c(x,(x^{\ast},y^{\ast },\alpha ))-f(x)\right\},\\
 g^{c^{\prime }}(x) & :=& \supp_{(x^{\ast },y^{\ast },\alpha )\in W}\left\{c^{\prime }\left( (x^{\ast },y^{\ast },\alpha ),x\right) -g(x^{\ast},y^{\ast },\alpha )\right\},
\end{eqnarray}
respectively, with the conventions $\left( +\infty \right) +\left( -\infty \right)=\left( -\infty \right) +\left( +\infty \right) =\left( +\infty \right)-\left( +\infty \right)$ $=\left( -\infty \right) -\left( -\infty \right)=-\infty.$\\
The  counterpart of the Fenchel-Moreau theorem for e-convex functions derives immediately from \cite[Corollary 6.1]{ML2005}, and a function  $f:X\en \R\cup \left\{+\infty\right\}$  is e-convex iff $f^{c c^{\prime}}=f$.

We can rewrite (\ref{eq6}) as
\begin{equation*}
f^{c}(x^{\ast },y^{\ast },\alpha) =\left\{
\begin{array}{ll}
 -\inf_{x\in X}\left\{ f(x)+ \langle -x, x^* \rangle \right\} & \text{if} \dom f \subseteq {H}^-_{y^*,\alpha}\\
 + \infty & \text{otherwise,}
 \end{array}
 \right.
 \end{equation*}
  for all $(x^{\ast },y^{\ast },\alpha) \in W$. This formula inspires the generalization of c-conjugacy to set-valued functions. As in the previous section, we assume that the convex cone $K$ is fix. 
\begin{definition} \label{def10}
Let $f:X \to \mathcal{P}(Z)$ be a set-valued function. We define the \emph{c-conjugate} of $f$ (respect to $K$), $f^c:X^* \times X^* \times ( K^* \setminus \{0\}) \times \R \to  \mathcal{P}(Z)$ as
\begin{equation*}
f^c (x^*, y^*, z^*, \alpha):=\left\{
\begin{array}{ll}
-\eco \left [ \bigcup_{ X} \lbrace f(x)+ \bar S_{(x^*,z^*)}(-x) \rbrace \right ] & \text{if } \dom f \subseteq {H}^-_{y^*,\alpha}\\
 \emptyset & \text{otherwise.}
 \end{array}
 \right.
 \end{equation*}

For a set-valued function $g:X^* \times X^* \times (K^* \setminus \{0\}) \times \R  \to \mathcal{P}(Z)$, its  \emph{c$^{\prime }$-conjugate} (respect to $K$), $g^{c'}:X \to \mathcal{P}(Z)$ is defined, if $ x \in{H}^-_{y^*,\alpha}$, for all $(x^*, y^*, z^*, \alpha) \in\dom g$:
\begin{equation*}
g^{c'} (x):=\bigcap_{\dom g} { \left \lbrace \bar S_{(x^*,z^*)}(x) - g (x^*, y^*, z^*, \alpha) \right \rbrace },
\end{equation*}
and $g^{c'} (x):=\emptyset$ otherwise.
 \end{definition}
 
 \begin{remark}
 Actually, $$ \inf \lbrace \lbrace f(x)+ \bar S_{(x^*,z^*)}(-x) \rbrace_X, \leq_K \rbrace=\bigcup_{ X} \lbrace f(x)+ \bar S_{(x^*,z^*)}(-x) \rbrace \text{   and}$$ 
 $$ \sup \left \lbrace\lbrace \bar S_{(x^*,z^*)}(x) - g (x^*, y^*, z^*, \alpha) \rbrace_{\dom g}, \leq_K \right  \rbrace = 
 \bigcap_{\dom g} { \left \lbrace \bar S_{(x^*,z^*)}(x) - g (x^*, y^*, z^*, \alpha) \right \rbrace }$$
 in the previous definitions, since $\bar S_{(x^*,z^*)}(-x)+K=\bar S_{(x^*,z^*)}(-x)$, for all $x \in X$.
 \end{remark}
\begin{definition}\label{definition1}
The function $\sigma_f : X^* \times X^* \times Z^* \times \R \to  \Ramp $ defined as
 \begin{equation*}
\sigma_f (x^*, y^*, z^*, \alpha):=\left\{
\begin{array}{ll}
\sup_{ \epi_K f}{ \lbrace \langle x, x^* \rangle + \langle z, z^* \rangle \rbrace}& \text{ if } \dom f \subseteq {H}^-_{y^*,\alpha},\\
 +\infty & \text{otherwise.}
 \end{array}
 \right.
 \end{equation*}
is called the \emph{support function} of the $K$-epigraph of the set-valued function $f:X \to \mathcal{P}(Z)$ relative to open half-spaces.
\end{definition}
 \begin{remark}
  Let $f:X \to \mathcal{P}(Z)$ be a set-valued function and let us consider the indicator function of $\epi_K f$, $\delta_{\epi_K f} :X \times Z \to \Ramp$,
 \begin{equation*}
\delta_{\epi_K f} (x,z)=\left\{
\begin{array}{ll}
0 & \text{if } z \in f(x)+K, \\
+\infty & \text{otherwise.}
\end{array}
\right.
\end{equation*}
 It is easy to check that $ \delta_{\epi_K f}^c (x^*, z^*,  y^*,0, \alpha) =\sigma_f (x^*, y^*, z^*, \alpha)$.
  \end{remark}
  \begin{remark}\label{remarkdomsigma}
  Let us observe that, if $f$ is $K$-e-convex and $f_K \not \equiv Z$, $\epi_K f$ is a proper e-convex subset of $X \times Z$ and hence it can be expressed as the intersection of a nonempty family of open half-spaces, being nontrivial at least one of them. Denote one of these half-spaces by $H= \lbrace (x,z) \in X \times Z:\langle x,x^*\rangle + \langle z, z^* \rangle <\beta \rbrace$. Since $\epi_K f \subseteq H$, we will have that  $\sigma_f (x^*, 0, z^*, \alpha) \leq \beta <+\infty$, for all $\alpha >0$ and $\dom \sigma_f \neq \emptyset$.
  \end{remark}
  
  \begin{definition}
  Let $C\subset X \times Z$ be a nonempty set. We define the function $\eta_C:X^* \times Z^* \to \lbrace 0,1\rbrace$ as
   \begin{equation*}
\eta_C(x^*,z^*)=\left\{
\begin{array}{ll}
0 & \text{if } \langle  x, x^* \rangle + \langle  z, z^* \rangle <\sup_C{ \lbrace \langle x, x^* \rangle + \langle z, z^* \rangle \rbrace} \text{ for all } (x, z) \in C, \\
1 & \text{otherwise.}
\end{array}
\right.
\end{equation*}
  \end{definition}
 \begin{lemma}\label{lemma3}
  Let  $f:X \to \mathcal{P}(Z)$ be a set-valued function. Then, for all  $(x^*, y^*, z^*, \alpha) \in X^* \times X^* \times (K^* \setminus \{0\}) \times \R $,
  \begin{equation*}
  -f^c (x^*, y^*, z^*, \alpha)= \left\{
  \begin{array}{ll}
  \lbrace z \in Z : \langle z, z^* \rangle<\sigma_f (x^*, y^*, z^*, \alpha) \rbrace & \text{ if } \eta_{\epi_K f}(x^*,z^*)=0,\\
  \lbrace z \in Z : \langle z, z^* \rangle\leq \sigma_f (x^*, y^*, z^*, \alpha) \rbrace & \text{ if } \eta_{\epi_K f}(x^*,z^*)=1.
  \end{array}
  \right.
  \end{equation*}
  Consequently, $\dom f^{c}=\dom \sigma_f$.
 \end{lemma}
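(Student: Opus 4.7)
The plan is to reduce the c-conjugate to a standard support-function computation by first writing the union inside the $\eco$ explicitly. Expanding $\bar S_{(x^*,z^*)}(-x) = \{z \in Z : \langle z, z^*\rangle \leq \langle x, x^*\rangle\}$ and translating by an element of $f(x)$, I would rewrite
\[
A := \bigcup_{x \in X}\{f(x) + \bar S_{(x^*,z^*)}(-x)\} = \{w \in Z : \exists\,(x,u) \in \gph f,\ \langle w, z^*\rangle \leq \langle x, x^*\rangle + \langle u, z^*\rangle\}.
\]
Since $z^* \in K^*$, replacing $u$ by $u+k$ with $k \in K$ only weakens the inequality, so both the supremum of $\langle x, x^*\rangle + \langle z, z^*\rangle$ over $\epi_K f$ and its attainment coincide with the corresponding quantities over $\gph f$. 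This lets me identify $s := \sigma_f(x^*,y^*,z^*,\alpha)$ with the supremum appearing in the characterization of $A$, and it lets me read $\eta_{\epi_K f}(x^*,z^*)$ as the attainment flag of that supremum on $\gph f$ as well.

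The core of the proof is then a two-case analysis governed by $\eta_{\epi_K f}(x^*,z^*)$. When $\eta_{\epi_K f}(x^*,z^*) = 1$, the supremum is attained at some $(x_0,u_0)\in\gph f$, so taking $(x,u) = (x_0,u_0)$ in the characterization yields $A = \{w : \langle w, z^*\rangle \leq s\}$, a closed half-space. When $\eta_{\epi_K f}(x^*,z^*) = 0$, the strict inequality $\langle x, x^*\rangle + \langle u, z^*\rangle < s$ holds throughout $\gph f$, and by the definition of supremum this forces $A = \{w : \langle w, z^*\rangle < s\}$, an open half-space (degenerating to $\emptyset$ when $s = -\infty$, i.e., $f \equiv \emptyset$, or to $Z$ when $s = +\infty$). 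In both cases $A$ is e-convex, so $\eco A = A$, and then $-f^c = -(-A) = A$, which is exactly the stated formula; here I would rely on the paper's conventions $-\emptyset = Z$ and $-Z = \emptyset$ to absorb the degenerate values of $s$ uniformly.

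The remaining case $\dom f \not\subseteq H^-_{y^*,\alpha}$ is immediate from the definitions: $f^c = \emptyset$ forces $-f^c = Z$, while $\sigma_f = +\infty$ makes both half-spaces on the right-hand side equal to $Z$. Finally, $\dom f^c = \dom \sigma_f$ is read off directly from the case analysis: $f^c(x^*,y^*,z^*,\alpha) = \emptyset$ occurs exactly when $\dom f \not\subseteq H^-_{y^*,\alpha}$ or $s = +\infty$, which is exactly when $\sigma_f(x^*,y^*,z^*,\alpha) = +\infty$. I expect the only delicate step to be bookkeeping of the boundary values $s\in\{-\infty,+\infty\}$ and verifying that $-(-A)=A$ survives through the conventions for $-\emptyset$ and $-Z$; the rest is a direct unfolding of Definitions \ref{def10} and \ref{definition1}.
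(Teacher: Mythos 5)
Your proposal is correct and follows essentially the same route as the paper: both arguments reduce to showing that $\bigcup_{X}\{f(x)+\bar S_{(x^*,z^*)}(-x)\}$ is exactly the open or closed half-space $\{z:\langle z,z^*\rangle\lessgtr\sigma_f(x^*,y^*,z^*,\alpha)\}$ according to the value of $\eta_{\epi_K f}(x^*,z^*)$, and then use the even convexity of half-spaces to discard the $\eco$ operator. Your explicit observation that the supremum and its attainment over $\epi_K f$ and over $\gph f$ coincide (because $z^*\in K^*$) is a clean packaging of the step the paper carries out by decomposing $\hat z=z_1+k$ with $k\in K$, and your bookkeeping of the degenerate values $\sigma_f\in\{-\infty,+\infty\}$ is, if anything, slightly more careful than the published proof.
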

 \begin{proof}
 Take any point $(x^*, y^*, z^*, \alpha) \in X^* \times X^* \times (K^* \setminus \{0\}) \times \R$.\\
 In first place, the equality holds trivially in the case $\dom f \not \subset H^-_{y^*, \alpha}$, because $\sigma_f (x^*, y^*, z^*, \alpha)=+\infty$ and $ -f^c (x^*, y^*, z^*, \alpha)= Z$ (recall that we set $-\emptyset=Z$).\\
 Hence, let us suppose that $\dom f \subset H^-_{y^*, \alpha}$.
 We denote  $$D_0=  \lbrace z \in Z : \langle z, z^* \rangle<\sigma_f (x^*, y^*, z^*, \alpha) \rbrace,$$  $$D_1=  \lbrace z \in Z : \langle z, z^* \rangle \leq \sigma_f (x^*, y^*, z^*, \alpha) \rbrace.$$\\
 In the case $ \eta_{\epi_K f}(x^*,z^*)=0$, take any point $x \in \dom f$, $z_1 \in f(x)$ and $z_2 \in \bar S_{(x^*,z^*)}(-x)$.  Then $$\langle z_1+z_2, z^* \rangle<\sigma_f (x^*, y^*, z^*, \alpha),$$ and $z_1+z_2 \in D_0$, which means that, for all $x \in \dom f$, $$f(x)+ \bar S_{(x^*,z^*)}(-x) \subset D_0,$$
and $D_0$ an e-convex set, implying that  $-f^c (x^*, y^*, z^*, \alpha) \subset D_0$.\\
For the converse inclusion, take $z \in D_0$ and $(\hat x, \hat z) \in \epi_K f$ verifying that $$\langle z, z^* \rangle <\langle \hat x, x^* \rangle + \langle \hat z, z^* \rangle. $$
 Then, since $\hat z=z_1+k$, for some $z_1 \in f(\hat x)$ and $k \in K$, we have
 $$\langle -\hat x, x^* \rangle + \langle z-z_1, z^* \rangle<\langle k, z^* \rangle \leq 0,$$
 and $z-z_1 \in  \bar S_{(x^*,z^*)}(-\hat x)$, obtaining $D_0 \subset -f^c (x^*, y^*, z^*, \alpha)$.
 The proof is similar for the case $ \eta_{\epi_K f}(x^*,z^*)=1$ and $D_1$; the inclusion $-f^c (x^*, y^*, z^*, \alpha) \subset D_1$ can be shown analogously, whereas the converse inclusion holds if we take a point  $(\hat x, \hat z) \in \epi_K f$ verifying that $$\langle z, z^* \rangle \leq \langle \hat x, x^* \rangle + \langle \hat z, z^* \rangle. $$\\
 Finally, we have that  $(x^*, y^*, z^*, \alpha) \in \dom f^c$ iff  $-f^c (x^*, y^*, z^*, \alpha)\neq Z$, which means that $\sigma_f (x^*, y^*, z^*, \alpha) <+\infty$, i.e.,  $(x^*, y^*, z^*, \alpha) \in \dom \sigma_f$.
  \qed
 \end{proof}

\begin{lemma}\label{lemma4}
 Let $f:X \to \mathcal{P}(Z)$ a set-valued function. Then, if $x \in  \dom f^{cc'}$,
 \begin{equation} \label{eq8}
  \begin{array}{ll}
 $$f^{cc'} (x)=\bigcap_{\dom \sigma_f} \lbrace  z \in Z :& \langle x, x^* \rangle+\langle z, z^* \rangle<\sigma_f (x^*, y^*, z^*, \alpha), \eta_{\epi_K f}(x^*,z^*)=0,\\
   &\langle x, x^* \rangle+\langle z, z^* \rangle \leq \sigma_f (x^*, y^*, z^*, \alpha), \eta_{\epi_K f}(x^*,z^*)=1 \rbrace .
 \end{array}
 \end{equation}
  Consequently,  $f^{cc'} (x)$ is an e-convex set in $Z$, for all $x \in X$, and $f^{cc'}: X \to  \mathcal{R}_K(Z)$ is a $K$-e-convex set-valued function. Moreover, $ f(x)\subseteq f^{cc'}(x) $ and then $f_K (x) \subseteq f^{cc'} (x)$,  for all $x \in X$. 
\end{lemma}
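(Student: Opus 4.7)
The plan is to unwind the two definitions in Definition \ref{def10} and feed in Lemma \ref{lemma3}. By Definition \ref{def10} applied to $g=f^c$, if $x\in\dom f^{cc'}$ then
$$f^{cc'}(x)=\bigcap_{(x^*,y^*,z^*,\alpha)\in\dom f^c}\bigl\{\bar S_{(x^*,z^*)}(x)-f^c(x^*,y^*,z^*,\alpha)\bigr\},$$
and by Lemma \ref{lemma3} the index set $\dom f^c$ equals $\dom\sigma_f$, while $-f^c(x^*,y^*,z^*,\alpha)$ is the open (resp.\ closed) half-space with right-hand side $\sigma_f(x^*,y^*,z^*,\alpha)$ according to whether $\eta_{\epi_K f}(x^*,z^*)$ is $0$ or $1$. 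Plugging these in and collapsing the Minkowski sum is the only substantive computation.

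The collapsing relies on the following elementary identity, which is the single step I expect to write out carefully: for $z^*\neq 0$ and $\sigma\in\R$,
\begin{align*}
\bar S_{(x^*,z^*)}(x)+\{z:\langle z,z^*\rangle<\sigma\}&=\{z:\langle x,x^*\rangle+\langle z,z^*\rangle<\sigma\},\\
\bar S_{(x^*,z^*)}(x)+\{z:\langle z,z^*\rangle\leq\sigma\}&=\{z:\langle x,x^*\rangle+\langle z,z^*\rangle\leq\sigma\}.
\end{align*}
The inclusion $\subseteq$ is immediate from summing the two inequalities. For $\supseteq$, given $z$ on the right, pick $t$ strictly between $\langle x,x^*\rangle+\langle z,z^*\rangle$ and $\sigma$ (or equal to $\sigma$, in the closed case), then use $z^*\neq 0$ to choose $z_0$ with $\langle z_0,z^*\rangle=t-\langle z,z^*\rangle$, and write $z=(z-z_0)+z_0$. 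Substituting these two identities into the intersection yields exactly formula (\ref{eq8}).

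For the consequences: formula (\ref{eq8}) exhibits $f^{cc'}(x)$ as an intersection of open and closed half-spaces in $Z$, hence it is e-convex; and because $z^*\in K^*$ every such half-space is invariant under the addition of elements of $K$, so $f^{cc'}(x)+K=f^{cc'}(x)$, giving $f^{cc'}:X\to\mathcal{R}_K(Z)$. To get $K$-e-convexity of $f^{cc'}$ as a set-valued function I would view $\epi_K f^{cc'}=\gph f^{cc'}$ in $X\times Z$: using (\ref{eq8}) together with the defining constraints $\dom f^{cc'}\subseteq\bigcap_{\dom\sigma_f}H^-_{y^*,\alpha}$, this graph is the intersection of the e-convex sets $\{(x,z):\langle x,y^*\rangle<\alpha\}$ and $\{(x,z):\langle x,x^*\rangle+\langle z,z^*\rangle<\sigma_f(\cdots)\}$ (or $\leq$, in the $\eta=1$ case), hence e-convex.

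Finally, for $f(x)\subseteq f^{cc'}(x)$: any $z\in f(x)$ gives $(x,z)\in\epi_K f$, so by Definition \ref{definition1} we have $\langle x,x^*\rangle+\langle z,z^*\rangle\leq\sigma_f(x^*,y^*,z^*,\alpha)$, with strict inequality whenever $\eta_{\epi_K f}(x^*,z^*)=0$ by definition of $\eta$. Moreover $x\in\dom f\subseteq H^-_{y^*,\alpha}$ for every $(x^*,y^*,z^*,\alpha)\in\dom\sigma_f$, so $x\in\dom f^{cc'}$ and $z$ satisfies (\ref{eq8}). The stronger inclusion $f_K(x)\subseteq f^{cc'}(x)$ is then immediate from $f^{cc'}(x)+K=f^{cc'}(x)$. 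The only real obstacle is the half-space Minkowski-sum identity above; everything else is a careful bookkeeping through the definitions and Lemma \ref{lemma3}.
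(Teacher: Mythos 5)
Your proposal is correct and follows essentially the same route as the paper's proof: rewrite $f^{cc'}(x)$ via Definition \ref{def10} and Lemma \ref{lemma3} as an intersection over $\dom \sigma_f$ of the sums $\bar S_{(x^*,z^*)}(x)+\bigl(-f^c(x^*,y^*,z^*,\alpha)\bigr)$, establish the half-space Minkowski-sum identity by adding the two inequalities in one direction and splitting $z$ using the surjectivity of $z^*\neq 0$ in the other, and then read off the e-convexity, $K$-e-convexity and minorant statements. The only flaw is a small slip in the witness for the reverse inclusion: with $t$ chosen between $\langle x,x^*\rangle+\langle z,z^*\rangle$ and $\sigma$, the summand meant to lie in $\lbrace w:\langle w,z^*\rangle<\sigma\rbrace$ should satisfy $\langle z_0,z^*\rangle=t$, so that $\langle x,x^*\rangle+\langle z-z_0,z^*\rangle=\langle x,x^*\rangle+\langle z,z^*\rangle-t\leq 0$; with your choice $\langle z_0,z^*\rangle=t-\langle z,z^*\rangle$ the two pieces need not land in the required sets, though the repair is immediate and the corrected decomposition is exactly the one the paper uses.
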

\begin{proof}
Take $\bar x \in \dom f^{cc'}$  and denote by

 \begin{equation*}
  \begin{array}{ll}
 $$D=\bigcap_{\dom \sigma_f} \lbrace  z \in Z :& \langle \bar x, x^* \rangle+\langle z, z^* \rangle<\sigma_f (x^*, y^*, z^*, \alpha), \eta_{\epi_K f}(x^*,z^*)=0,\\
   &\langle \bar x, x^* \rangle+\langle z, z^* \rangle \leq \sigma_f (x^*, y^*, z^*, \alpha), \eta_{\epi_K f}(x^*,z^*)=1 \rbrace .
 \end{array}
 \end{equation*}
Take now $(\bar x^*,\bar  y^*, \bar z^*,\bar  \alpha) \in \dom \sigma_f$. Name
 $$ D_0=\lbrace z \in Z:  \langle \bar x, \bar x^* \rangle+\langle z, \bar z^* \rangle<\sigma_f (\bar x^*, \bar y^*, \bar z^*, \bar \alpha) \rbrace,$$
 $$D_1=\lbrace z \in Z:  \langle \bar x, \bar x^* \rangle+\langle z, \bar z^* \rangle \leq \sigma_f (\bar x^*, \bar y^*, \bar z^*, \bar \alpha)  \rbrace.$$
   
From the definition of the {c$^{\prime }$-conjugate} function, and taking into account that, according to Lemma \ref{lemma3},  $\dom f^c=\dom \sigma_f$, it holds
\begin{equation*}
f^{cc'} (\bar x)=
\bigcap_{\dom \sigma_f} { \left [ \bar S_{(x^*,z^*)}( \bar x) - f^c (x^*, y^*, z^*, \alpha) \right ]} .
 \end{equation*}
Take $\bar z \in f^{cc'} (\bar x)$ and suppose  $\eta_{\epi_K f}(\bar x^*,\bar z^*)=0$. Since $(\bar x^*, \bar y^*,\bar  z^* ,\bar  \alpha) \in \dom \sigma_f$, we have $\bar z \in \bar S_{(\bar x^*,\bar z^*)}( \bar x) - f^c (\bar x^*, \bar y^*, \bar z^*, \bar \alpha)$ and  $\bar z= z_1 + z_2$, where $z_1 \in S_{(\bar x^*,\bar z^*)}( \bar x)$ and $z_2 \in - f^c (\bar x^*, \bar y^*, \bar z^*, \bar \alpha)$. Therefore
$$ \langle \bar x, \bar x^* \rangle+\langle z_1, \bar z^* \rangle \leq 0,$$
and, by Lemma \ref {lemma3}, $$\langle z_2, \bar z^* \rangle<\sigma_f (\bar x^*, \bar y^*, \bar z^*, \bar \alpha).$$
Then $ \langle \bar x, \bar x^* \rangle+\langle \bar z, \bar z^* \rangle <\sigma_f (\bar x^*, \bar y^*, \bar z^*, \bar \alpha)$ and $\bar z \in D_0$. Analogously, it can be shown that, if  $\eta_{\epi_K f}(\bar x^*,\bar z^*)=1$, $f^{cc'} (\bar x) \subseteq D_1 $. Then $f^{cc'} (\bar x) \subseteq D$.\\
For the converse inclusion, take $\bar z \in D$ and $(\bar x^*, \bar y^*, \bar z^*, \bar \alpha) \in  \dom \sigma_f=\dom f^c$. In the case $\eta_{\epi_K f}(\bar x^*,\bar z^*)=0$, we denote by 
$$\beta = \sigma_f (\bar x^*, \bar y^*, \bar z^*, \bar \alpha)-\langle \bar x, \bar x^* \rangle-\langle \bar z, \bar z^* \rangle>0.$$
Choose $z_2 \in Z$ such that $-\beta < \langle \bar x, \bar x^* \rangle+\langle z_2, \bar z^* \rangle \leq 0$, 
and $\bar z=(\bar z-z_2)+z_2$, with $z_2 \in \bar S_{(\bar x^*,\bar z^*)}( \bar x)$ and, according to Lemma \ref{lemma3},  $\bar z-z_2 \in  -f^c(\bar x^*, \bar y^*, \bar z^*, \bar \alpha)$.\\
In the case $\eta_{\epi_K f}(\bar x^*,\bar z^*)=1$, we can follow the same steps than in the previous proof, choosing $z_2 \in Z$ such that $\langle \bar x, \bar x^* \rangle+\langle z_2, \bar z^* \rangle =0$.\\
Hence, if $\bar x \in\dom f^{cc'}$, then $f^{cc'} (\bar x)=D$.\\
It is clear that, for all $x \in X$, $f^{cc'} (x)$ is an e-convex set in $Z$. Moreover, $\epi_K f^{cc'}$ is the intersection of the epigraphs of all the $K$-e-convex set-valued functions
$$a_0(x)=  \lbrace  z \in Z : \langle x, x^* \rangle+\langle z, z^* \rangle<\sigma_f (x^*, y^*, z^*, \alpha)\rbrace \text { if }  \eta_{\epi_K f}(x^*,z^*)=0,$$
$$a_1(x)=  \lbrace  z \in Z : \langle x, x^* \rangle+\langle z, z^* \rangle\leq \sigma_f (x^*, y^*, z^*, \alpha) \rbrace  \text { if }  \eta_{\epi_K f}(x^*,z^*)=1,$$
for all $(x^*, y^*, z^*, \alpha) \in \dom \sigma_f$, and $f^{cc'}$ is $K$-e-convex. \\
Is is evident also from (\ref{eq8}) that $f^{cc'}(x)+K \subseteq f^{cc'}(x)$, for all $x \in X$, therefore $\eco (f^{cc'}(x)+K) \subseteq f^{cc'}(x)$, for all $x \in X$, and $f^{cc'}: X \to  \mathcal{R}_K(Z)$. The inclusions $ f(x) \subseteq f^{cc'}(x)$ and $f_K (x) \subseteq f^{cc'} (x)$, for all $x \in X$,  are trivial.
 \qed
\end{proof}

Next we will give the biconjugation theorem.
\begin{theorem}\label{theo5}
Let $f:X \to \mathcal{P}(Z)$ be a set-valued function. Then it is $K$-e-convex if and only if $f^{cc'} (x)=f_K(x)$, for all $x \in X$.
\end{theorem}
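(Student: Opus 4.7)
Plan.

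The direction $(\Leftarrow)$ is immediate from Lemma \ref{lemma4}: assuming $f^{cc'}=f_K$, the set-valued function $f^{cc'}$---and hence $f_K$---is $K$-e-convex, so $\epi_K f=\gph f_K=\gph f^{cc'}=\epi_K f^{cc'}$ is e-convex, whence $f$ is $K$-e-convex.

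For $(\Rightarrow)$, assume $f$ is $K$-e-convex. Lemma \ref{lemma4} already delivers $f_K(x)\subseteq f^{cc'}(x)$ for every $x\in X$, so only the reverse inclusion $f^{cc'}(x)\subseteq f_K(x)$ requires work. The plan is to fix an arbitrary $\bar z\notin f_K(\bar x)$, i.e., $(\bar x,\bar z)\notin\epi_K f$, and exhibit a tuple in $\dom f^c=\dom\sigma_f$ (recall Lemma \ref{lemma3}) at which the intersection formula of Lemma \ref{lemma4} rejects $\bar z$. Since $\epi_K f$ is e-convex and does not contain $(\bar x,\bar z)$, Proposition \ref{defeconv} supplies $(x^*,z^*)\in (X^*\times Z^*)\setminus\{0\}$ with
\begin{equation*}
\langle \hat x,x^*\rangle+\langle \hat z,z^*\rangle<\langle \bar x,x^*\rangle+\langle \bar z,z^*\rangle\qquad\text{for all } (\hat x,\hat z)\in\epi_K f.
\end{equation*}
Mimicking the arguments in the proofs of Theorems \ref{theorem2} and \ref{theorem4}, invariance of $\epi_K f$ under shifts $(0,k)$ with $k\in K$ forces $z^*\in K^*$, and substituting $\hat x=\bar x$ (when $\bar x\in\dom f$) rules out $z^*=0$. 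Taking $(y^*,\alpha):=(0,1)$ trivially gives $\dom f\subseteq H^-_{0,1}=X$, and the separation above yields $\sigma_f(x^*,0,z^*,1)\leq\langle\bar x,x^*\rangle+\langle\bar z,z^*\rangle<+\infty$, so the tuple $(x^*,0,z^*,1)$ belongs to $\dom f^c$.

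The decisive step is a case analysis on the indicator $\eta_{\epi_K f}(x^*,z^*)$. Set $\gamma:=\langle\bar x,x^*\rangle+\langle\bar z,z^*\rangle$ and $\beta:=\sigma_f(x^*,0,z^*,1)$, so $\beta\leq\gamma$ by the separation. If $\eta_{\epi_K f}(x^*,z^*)=1$ the supremum $\beta$ is attained at some $(\hat x_0,\hat z_0)\in\epi_K f$; the strict separation sharpens this to $\beta<\gamma$, so the non-strict condition $\gamma\leq\beta$ required by Lemma \ref{lemma4} for $\bar z\in f^{cc'}(\bar x)$ fails. If $\eta_{\epi_K f}(x^*,z^*)=0$ the relevant condition is the strict one $\gamma<\beta$, which also fails since $\gamma\geq\beta$. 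Either way $\bar z\notin f^{cc'}(\bar x)$, completing the reverse inclusion. The residual subcase $\bar x\notin\dom f$ will be dispatched by instead choosing $(y^*,\alpha)$ that separates $\bar x$ from $\dom f$ (or from its e-convex hull), so that $\bar x\notin H^-_{y^*,\alpha}$ and Definition \ref{def10} forces $f^{cc'}(\bar x)=\emptyset$ directly. I expect the main obstacle to be exactly this $\eta$-dichotomy: using only the $\eta=0$ (open half-space) branch would recover merely the closed convex hull of $\epi_K f$; the $\eta=1$ branch, with closed half-spaces tangent to $\epi_K f$, is what lets the biconjugate capture the e-convex but possibly non-closed structure of $\epi_K f$ precisely.
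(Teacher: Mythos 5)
Your argument is correct in substance but follows a genuinely different route from the paper's. The paper proves $f^{cc'}(x)\subseteq f_K(x)$ indirectly: it first checks the degenerate cases, then invokes Theorem \ref{theorem4} to write both $\epi_K f$ and $\epi_K f^{cc'}$ as intersections of $K$-epigraphs of e-affine minorants, and shows the inclusion $\mathcal{E}_f\subseteq\mathcal{E}_{f^{cc'}}$ by propagating each e-affine minorant through the conjugate (the key computation being $-f^c(x^*,y^*,z^*,\alpha)\subseteq S_{(x^*,z^*)}(0)+\tilde z$). You instead argue pointwise: separate $(\bar x,\bar z)\notin\epi_K f$ via Proposition \ref{defeconv}, manufacture a tuple $(x^*,0,z^*,1)\in\dom f^c$, and use the explicit description of $f^{cc'}$ in Lemma \ref{lemma4} to reject $\bar z$. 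Your $\eta$-dichotomy is the right crux and is handled correctly: when $\eta_{\epi_K f}(x^*,z^*)=1$ the supremum $\beta$ is attained, so the strict separation gives $\beta<\gamma$ and the non-strict test $\gamma\leq\beta$ fails; when $\eta=0$ the strict test $\gamma<\beta$ fails since $\beta\leq\gamma$. Your route is more self-contained (it bypasses Theorem \ref{theorem4} entirely and makes visible why the closed-half-space branch of Lemma \ref{lemma4} is what distinguishes the e-convex hull from the closed convex hull), at the cost of redoing the separation bookkeeping that the paper has already packaged into Theorems \ref{theorem2} and \ref{theorem4}; the paper's route is shorter given that machinery and also yields $\epi_K f^{cc'}\subseteq\epi_K f$ in one stroke.

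One point needs repair: your case split should be on $\bar x\in M_f=\eco(\dom f)$ versus $\bar x\notin M_f$, not on $\bar x\in\dom f$ versus $\bar x\notin\dom f$. If $\bar x\in M_f\setminus\dom f$, your fallback of choosing $(y^*,\alpha)$ with $\dom f\subseteq H^-_{y^*,\alpha}$ and $\bar x\notin H^-_{y^*,\alpha}$ is impossible by the very definition of the e-convex hull. Fortunately the main argument survives there: if $z^*=0$ in the separation, then $\langle \hat x,x^*\rangle<\langle\bar x,x^*\rangle$ for all $\hat x\in\dom f$, which forces $\bar x\notin M_f$ (this is exactly the argument in the proof of Theorem \ref{theorem2}); so $z^*\neq 0$ automatically whenever $\bar x\in M_f$. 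In the remaining case $\bar x\notin M_f$ you should also verify that the separating $(y^*,\alpha)$ can be completed to a tuple genuinely lying in $\dom f^c$, i.e.\ with $z^*\in K^*\setminus\{0\}$ and $\sigma_f$ finite; this follows by separating any $(x_0,z_0)\notin\epi_K f$ with $x_0\in\dom f$ (possible since $f_K$ is proper in the nondegenerate case, cf.\ Remark \ref{remarkdomsigma}). With these adjustments, and an explicit word on the cases $f\equiv\emptyset$ and $f_K\equiv Z$ (both of which are immediate from Lemma \ref{lemma4}), the proof is complete.
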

\begin{proof}  
Let us assume that $f$ is $K$-e-convex. If $f_K \equiv Z$, from Lemma \ref{lemma4}, we have $f_K=f^{cc'}$. In the case $f \equiv \emptyset$, it will be $\sigma_f \equiv -\infty$ and again applying Lemma \ref{lemma4}, $f^{cc'} \equiv \emptyset$. Hence, if either $f_K \equiv Z$ or $f \equiv \emptyset$, it follows that $f^{cc'}=f_K$. \\
Suppose that $f_K \not \equiv Z$ and $\dom f \neq \emptyset$. According to  Lemma \ref {lemma4}, it is enough to prove that $ f^{cc'}(x) \subseteq f_K(x) $, for all $x \in X$. In first place, let us observe that, 
according again to Lemma \ref{lemma4}, $ \emptyset \neq \dom f  \subseteq \dom f^{cc'}$ and, moreover, recalling Remark \ref{remarkdomsigma}, $\dom \sigma_f \neq \emptyset$, and for all $x \in \dom f^{cc'}$, $f^{cc'}(x)\neq Z$ and $f^{cc'}$ is proper. Since  $ f_K(x)  \subseteq f^{cc'}(x)$, for all $x \in X$, and  $f_K \not \equiv \emptyset$, $f_K$ is also proper.\\
 Applying Theorem \ref{theorem4} to both functions, $f$ and $f^{cc'}$, we have $\mathcal {E}_f \neq \emptyset$, $\mathcal {E}_{f^{cc'}} \neq \emptyset$ (see the proof of Theorem \ref{theorem4}) and
$$\epi_K f = \bigcap_{\mathcal{E}_f}\epi_K a \text {     and    } \epi_K f ^{cc'}= \bigcap_{\mathcal{E}_{f^{cc'}}}\epi_K a.$$
We will show that $\mathcal{E}_f \subseteq \mathcal{E}_{f^{cc'}}$. Take any e-affine function $a \in \mathcal{E}_f$,
\begin{equation*}
a(x) =\left\{
\begin{array}{ll}
S_{(x^*,z^*)}(x)+\tilde z & \text{if }  \langle x, y^* \rangle < \alpha, \\
\emptyset & \text{otherwise.}
\end{array}
\right.
\end{equation*}
Then $f(x) \subseteq a(x)$, for all $x \in X$, and it holds, for all $x \in \dom f \subset H^-_{y^*, \alpha}$,
$$f(x) \subseteq S_{(x^*,z^*)}(x)+\tilde z.$$
Taking into account that $\bar S_{(x^*,z^*)}(- x)+S_{(x^*,z^*)}(x)  \subseteq S_{(x^*,z^*)}(0)$, we will obtain, for all $x \in \dom f$,
$$f(x) + \bar S_{(x^*,z^*)}(- x)  \subseteq S_{(x^*,z^*)}(0)+\tilde z.$$
Clearly $ S_{(x^*,z^*)}(0)+\tilde z$ is an e-convex set, hence
\begin{equation} \label{eq4}
-f^c (x^*, y^*, z^*, \alpha) \subseteq S_{(x^*,z^*)}(0)+\tilde z,
\end{equation}
and $(x^*, y^*, z^*, \alpha) \in \dom f^c$.\\
Now,  if for some point $\bar x \in X$, $f^{cc'}(\bar x)= \emptyset$, then  $f^{cc'}(\bar x) \subseteq a(\bar x)$ and $a(\bar x) \leq_K  f^{cc'}(\bar x)$. Then, assuming that $f^{cc'}(\bar x) \neq \emptyset$, from (\ref{eq4}) and Definition \ref{def10} we conclude that 
$$f^{cc'}( \bar x) \subseteq  \bar S_{(x^*,z^*)}(\bar x) +  S_{(x^*,z^*)}(0)+\tilde z \subseteq S_{(x^*,z^*)}(\bar x)+\tilde z = a(\bar x),$$
and $a \in  \mathcal{E}_{f^{cc'}}$. Hence, $\epi_K f^{cc'} \subseteq \epi_K f$ and $ f^{cc'}(x) \subseteq f_K(x) $, for all $x \in X$.
\qed
\end{proof}

\begin{theorem}
Let $f:X \to \mathcal{P}(Z)$ be a set-valued function. Then $f^{cc'} = K\text{-}\eco f$.
\end{theorem}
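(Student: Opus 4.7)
The plan is to reduce the claim to Theorem \ref{theo5} by first establishing the identity $f^c = (\Keco f)^c$. Since $\Keco f$ is $K$-e-convex by construction and takes values in $\mathcal{R}_K(Z)$ (so that $(\Keco f)_K = \Keco f$), Theorem \ref{theo5} applied to $\Keco f$ yields $(\Keco f)^{cc'} = \Keco f$. Taking $c'$-conjugates of $f^c = (\Keco f)^c$ would then give $f^{cc'} = (\Keco f)^{cc'} = \Keco f$, which is exactly the desired identity.

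For the key step $f^c = (\Keco f)^c$, I would invoke the explicit description in Lemma \ref{lemma3}: on any $(x^*, y^*, z^*, \alpha) \in X^* \times X^* \times (K^* \setminus \{0\}) \times \R$, the value $-f^c(x^*, y^*, z^*, \alpha)$ is fully determined by three ingredients, namely (i) whether $\dom f \subseteq H^-_{y^*, \alpha}$, (ii) the number $\sigma_f(x^*, y^*, z^*, \alpha)$, and (iii) the indicator $\eta_{\epi_K f}(x^*, z^*)$. Writing $C := \epi_K f$, so that $\epi_K(\Keco f) = \eco C$, it is enough to verify that each of (i)--(iii) is invariant when $C$ is replaced by $\eco C$.

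All three checks rest on the same underlying principle: any affine half-space, open or closed, is e-convex in $X \times Z$, so a half-space that contains $C$ must also contain $\eco C$. For (i), the inclusion $\dom f \subseteq H^-_{y^*, \alpha}$ is equivalent to $C \subseteq H^-_{y^*, \alpha} \times Z$, a cylindrical open half-space in the product space (and therefore e-convex), so $\eco C$ lies in it as well; the converse direction is immediate since $\dom f \subseteq \dom(\Keco f)$. For (ii), if $\sigma_f = \beta < +\infty$ then $C$ lies in the closed e-convex half-space $\{(x,z) : \langle x, x^* \rangle + \langle z, z^* \rangle \leq \beta\}$, which cannot be escaped by $\eco C$, yielding $\sigma_{\Keco f} \leq \beta$; the reverse inequality follows from $C \subseteq \eco C$. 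For (iii), $\eta_C(x^*, z^*) = 0$ confines $C$ to the open e-convex half-space where the coupling is strictly below $\sigma_f$, and the same then holds for $\eco C$, while attainment on $C$ trivially forces attainment on $\eco C \supseteq C$. I do not anticipate a serious obstacle; the most delicate point is (i), since the $X$-projection of an e-convex set need not be e-convex in general, but the specific half-spaces $H^-_{y^*, \alpha} \times Z$ that arise here are cylindrical in $Z$, which neatly sidesteps that issue.
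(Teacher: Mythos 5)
Your proof is correct, and it reaches the conclusion by a genuinely different decomposition than the paper's. The paper splits the identity into two inclusions: $\Keco f(x) \subseteq f^{cc'}(x)$ comes for free from Lemma \ref{lemma4} (which shows that $f^{cc'}$ is a $K$-e-convex minorant of $f$) together with the maximality of $\Keco f$ among such minorants, while the reverse inclusion is obtained at the level of the biconjugates, using only the trivial inequality $\sigma_f \leq \sigma_{\Keco f}$ in formula (\ref{eq8}) before invoking Theorem \ref{theo5} for $\Keco f$. You instead prove the stronger intermediate identity $f^c = (\Keco f)^c$ at the level of the first conjugate, which forces you to establish the full equalities $\sigma_f = \sigma_{\Keco f}$ and $\eta_{\epi_K f} = \eta_{\epi_K (\Keco f)}$ together with the equivalence of the domain conditions; all three follow, as you say, from the single principle that an open or closed half-space of $X \times Z$ containing $\epi_K f$ (including the cylindrical one $H^-_{y^*,\alpha} \times Z$) must contain its e-convex hull, since half-spaces are e-convex. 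Your route buys the identity $f^c = (\Keco f)^c$ as a by-product, and your explicit matching of the indicators $\eta$ makes precise a point the paper's inclusion argument leaves implicit: in the boundary case $\sigma_f = \sigma_{\Keco f}$ with the supremum attained on $\epi_K f$, one does need $\eta_{\epi_K(\Keco f)} = 1$ for the sets in (\ref{eq8}) to nest. The paper's route buys brevity, since one inclusion requires no computation at all. The only detail worth adding to your write-up is the degenerate case $\epi_K f = \emptyset$, where $\eta$ is undefined; there $\sigma_f \equiv -\infty$ determines both conjugates and the argument goes through unchanged.
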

\begin{proof}
By Lemma \ref{lemma4}, $  f^{cc'} (x) \leq_K f(x)$, for all $x \in X$, hence $ f^{cc'}(x) \leq_K \Keco f(x)$, for all $x \in X$, since $f^{cc'}$ is a $K$-e-convex minorant of $f$.\\
Now we will show that $ \Keco f(x) \leq_K f^{cc'} (x)$, i.e. $ f^{cc'} (x) \subseteq \Keco f(x)$, for all $x \in X$.\\
Since $f(x)+K \subseteq \Keco f(x)$, for all $x \in X$, in particular, $\dom f \subseteq \dom \Keco f$. Take  $(x^*,  y^*, z^*, \alpha) \in X^* \times X^* \times (K^* \setminus\{0\}) \times \R$, such that $\dom f \subseteq H^-_{y^*, \alpha}$. Then, by Definition \ref{definition1},
\begin{equation}\label{eq5}
\sigma_f (x^*,  y^*, z^*,  \alpha)=\supp_{ \epi_K f}  \lbrace \langle x, x^* \rangle+\langle z, z^* \rangle \rbrace \leq \supp_{\epi_K \Keco f}  \lbrace \langle x, x^* \rangle+\langle z, z^* \rangle \rbrace
\end{equation}
Therefore, if $\dom f \subseteq H^-_{y^*, \alpha}$,
$$\sigma_f (x^*,  y^*, z^*,  \alpha) \leq \sigma_{\Keco f} (x^*,  y^*, z^*,  \alpha).$$
On the other hand, if  $\dom f \not \subseteq H^-_{y^*, \alpha}$, $\dom \Keco f \not \subseteq H^-_{y^*, \alpha}$ either, and 
hence, $\sigma_f (x^*,  y^*, z^*,  \alpha)= \sigma_{\Keco f} (x^*,  y^*, z^*,  \alpha)=+ \infty$. According to Lemma \ref{lemma4}, for all $x \in X$, $f^{cc'} (x) \subseteq {(\Keco f)}^{cc'} (x)=\Keco f(x)$, the last equality follows from Theorem \ref{theo5}.
\qed
\end{proof}

\begin{example}
Let us consider a well-known extended real-valued  function, the indicator function of a set $C \subseteq X$, $\delta_C: X \to \Ramp$,
\begin{equation*}
\delta_C (x)=\left\{
\begin{array}{ll}
0 & \text{if } x \in C, \\
+\infty & \text{otherwise.}
\end{array}
\right.
\end{equation*}
We will have that $C$ is e-convex iff $\delta_C$ is e-convex. On the other hand, denoting by $\delta_C^*$ the Fenchel conjugate of $\delta_C$,  we have that the c-conjugate  of $\delta$, $\delta^c_C: X^*\times X^* \times \R \to \Ramp$, is
\begin{equation*}
\delta^{c}_C (x^*, y^*, \alpha)=\left\{
\begin{array}{ll}
\delta^*_C(x^*) & \text{if }  C \subseteq H^{-}_{y^*,\alpha}, \\
+\infty & \text{otherwise,}
\end{array}
\right.
\end{equation*}
where $\delta_C^*(x^*)=\sigma_C (x^*)$ is the  well-known support function of $C$. Moreover, $\delta ^{cc'}_C=\delta_{\eco C}$.\\
Coming back to the set-valued functions framework, with $K$ a convex cone and taking into account that $\eco K + K= \eco K$, let us consider the set-valued  indicator function of a set $C \subset X$, $\Delta_C :X \to \mathcal{R}_K(Z)$,
\begin{equation*}
\Delta_C (x)=\left\{
\begin{array}{ll}
\eco K & \text{if } x \in C, \\
\emptyset & \text{otherwise,}
\end{array}
\right.
\end{equation*}
which is $K$-e-convex iff $C$ is e-convex. We calculate its c-conjugate (respect to $K$),
\begin{equation*}
\Delta^{c}_C (x^*, y^*, z^*, \alpha)=\left\{
\begin{array}{ll}
-\eco \left [ \bigcup_{ C} \lbrace \bar S_{(x^*,z^*)}(-x) \rbrace \right ] & \text{if }  C \subseteq H^{-}_{y^*,\alpha}, \\
\emptyset & \text{otherwise.}
\end{array}
\right.
\end{equation*}
Making a comparison with the (convex) conjugate set-valued function of $\Delta_C$ suggested in \cite[Section 4]{H2009}, $\Delta^*_C:X^* \times (K^*\setminus \{0\}) \to \mathcal{P}
(Z)$,
$$\Delta^*_C (x^*, z^*)=-\cl \left [ \bigcup_{ C} \lbrace \bar S_{(x^*,z^*)}(-x) \rbrace \right ], $$
we deduce that, if  $C \subseteq H^{-}_{y^*,\alpha}$, we could have $\Delta^{c}_C (x^*, y^*, z^*, \alpha) \subsetneq \Delta^*_C (x^*, z^*)$, this is not the case in the scalar framework, but recall that also $K$-convex hull concept is tighter in the set-valued context than  in the scalar one. Finally, we also have that $\Delta^{cc'}_C=\Delta_{\eco C}$.

\end{example}

\begin{acknowledgements}
The author sincerely thanks anonymous referees for their careful reading and thoughtful
comments. Their suggestions have significantly improved the quality of the paper.\\
Research partially supported by MINECO of Spain and ERDF of EU, Grant MTM2014-59179-C2-1-P.
\end{acknowledgements}

\section*{Conflict of interest}

The author declares that she has no conflict of interest.

\bibliographystyle{spmpsci}  
\bibliography{biblio}

\end{document}